\documentclass{article}

\usepackage{natbib}
\bibliographystyle{abbrvnat}
\setcitestyle{authoryear,open={(},close={)}} 

\usepackage{float}
\usepackage{xcolor}
\usepackage{graphicx}
\usepackage{hyperref}
\usepackage{amsmath, amsthm, amssymb, amsfonts}
 \usepackage{amsfonts}
 \usepackage{amsmath}

\DeclareMathOperator*{\argmin}{arg\,min}
 \usepackage{amsthm}
 \usepackage{graphicx}
 \usepackage{hyperref}
 \usepackage{bigints}
 \usepackage{wrapfig} 
 \usepackage{caption}
\usepackage{subcaption}
\usepackage{mathtools}

\usepackage{changes}

\definechangesauthor[name=Soe, color=blue]{so}
\definechangesauthor[name=Ern, color=violet]{em}


\usepackage[english]{babel}
\newtheorem{thm}{Theorem}[section] 
\newtheorem{theorem}[thm]{Theorem}
\newtheorem{assumptions}[thm]{Assumption}

\newtheorem{corollary}[thm]{Corollary}
\newtheorem{lemma}[thm]{Lemma}
\newtheorem{proposition}[thm]{Proposition}
\newtheorem{remark}[thm]{Remark}
\newtheorem{definition}[thm]{Definition}

\newcommand\var{\operatorname{\mathbf{var}}}


\newcommand{\R}{\mathbb{R}}

\renewcommand\P{\operatorname{\mathbf{P}}}
\newcommand\E{\operatorname{\mathbf{E}}}
\makeatletter
\newcommand*\bigcdot{\mathpalette\bigcdot@{.5}}
\newcommand*\bigcdot@[2]{\mathbin{\vcenter{\hbox{\scalebox{#2}{$\m@th#1\bullet$}}}}}
\makeatother

\begin{document} 
 \title{Two sided ergodic singular control and \\ mean-field game for diffusions}
 \author{S\"oren Christensen\footnote{Department of Mathematics, Christian-Albrechts-Universit\"at Kiel, Germany. email: christensen@math.uni-kiel.de} \and Ernesto Mordecki\footnote{Centro de Matem\'atica. Facultad de Ciencias, Universidad de la Rep\'ublica, Montevideo, Uruguay. Corresponding author. email: mordecki@cmat.edu.uy} \and Facundo Oli\'u\footnote{Ingenier\'ia Forestal. Centro Universitario de Tacuaremb\'o, Universidad de la Rep\'ublica, Uruguay. email: facundo.oliu@cut.edu.uy}}
\maketitle
\begin{abstract} %
In a probabilistic mean-field game driven by a linear diffusion 
an  individual player aims to minimize an ergodic long-run cost by
controlling the diffusion through a pair of --increasing and decreasing-- c\`adl\`ag processes, 
while he is interacting with an aggregate of players through the expectation of a similar diffusion controlled by another pair of c\`adl\`ag processes.
In order to find equilibrium points in this game, 
we first consider the control problem, in which the individual player has no interaction with the aggregate of players.
In this case, we prove that the best policy is to reflect the diffusion process within two thresholds. 
Based on these results, we obtain criteria for the existence of equilibrium points in the mean-field game in the case when the controls of the aggregate of players are of reflection type, and give a pair of nonlinear equations to find these equilibrium points.
In addition, we present an approximation result for Nash equilibria of erdogic games with finitely many players to the mean-field game equilibria considered above when the number of players tends to infinity.
These results are illustrated by several examples where the existence and uniqueness of the equilibrium points depend on the coefficients of the underlying diffusion.
\end{abstract}
\vspace{1cm}
\par\noindent
\textbf{Keywords:} Ergodic singular control, Diffusions, mean-field games, Nash equilibrium 
\newpage
\section{Introduction}
In recent years, mean-field game theory has emerged as a powerful framework for modeling the behavior of large populations of interacting players in a stochastic environment. This interdisciplinary field lies at the intersection of mathematics, economics, and engineering, offering deep insights into complex systems characterized by strategic interactions. 
Mean-field game models have found applications in various domains, including for instance, finance, energy systems \citep{Carmona}, or traffic management and social dynamics \citep{ADRIANOFIESTA!}.
The two seminal papers in the field can be considered the contributions by 
\citet{HMC(2006)} and \citet{LL(2007)}. The key issue in their proposals, 
under the assumption of a large number of identically interacting players, 
is that individual actions do not affect a mean state of the system. 
This means that an individual player faces an optimization problem against a synthetic player, 
resulting from the aggregation of a large number of players, which is referred to in this paper as \emph{the market}.
The success of the proposal made it possible to solve various problems, many of which can be found in the two-volume monograph by \cite{CD(2018)}, which has become a central reference in the field.

The first results of the present paper are in the framework of singular control of diffusions.
Our departure point are the results by \cite{Alvarez}, where the existence and uniqueness of optimal reflecting controls for a diffusion are established. 
Our contribution is to extend these results to show that the solution found by \cite{Alvarez} is in fact the optimal control within the larger class of finite variation controls.
To do this, we use the solution of the two-sided ergodic singular control, in the framework proposed by \cite{Alvarez}, thus extending the class of controls.
To achieve our goals, we postulate a verification result in the form of a Hamilton-Jacobi-Bellman equation and use the ergodic properties of the controlled processes to obtain an analytic problem. 
The control problem has been studied extensively in the literature, see for example \cite{AS}, \cite{HNUW}, and \cite{LESE}. 
With respect to applications of singular control results, we mention studies focusing on cash flow management that investigate optimal dividend distribution, recapitalization, or a combination of both, while considering risk neutrality. 
See, for example, \cite{AT}, \cite{HT}, \cite{JS}, \cite{P}, \cite{BK}, and \cite{SLG}. 

Our second aim in the present paper is to incorporate a mean-field game dependence into the two-sided ergodic singular control problem for It\^o diffusions just described.
%
As a consequence, we obtain necessary and sufficient conditions for the existence of mean-field game equilibrium points,
and,  for more restricted families of cost functions, uniqueness within the class of reflecting strategies. 
Finally, we define an $N$-player problem and prove that a mean-field equilibrium is an approximate Nash equilibrium for the $N$-player game.

The mean-field game framework is less discussed in the literature.  However, there has been increased activity in this area in the recent past.
Here we would like to mention the current papers by \cite{ABF,CDF,DFT,KKXYZ,SBT,CG}, on the explicit solution of stationary Mean Field Games with singular and impulsive controls. 

The rest of the paper is organized as follows. 
In Section \ref{S:theproblem} we study the control problem. 
After introducing the necessary tools, 
we state and prove the main result of the section, i.e. the optimality of reflecting controls obtained within the class of càdlàg controls.
In Section \ref{S:MFG} we consider the mean-field game problem. It adds the complexity of a two-variable cost function where the second variable represents the market. The main result consists of a set of conditions for the existence and uniqueness of equilibrium strategies, containing also a particular analysis when the cost function is multiplicative. 
Section \ref{S:examples} presents three examples that illustrate these results.
Section \ref{S:epsilon} contains approximation results. The equilibrium found for mean field games,
becomes the limit of Nash equilibrium strategies 
when considering an individual player in the framework of a symmetric $N$-player game.
A final appendix includes some auxiliary computations corresponding to the examples of Section \ref{S:examples}.
\section{Control problem}\label{S:theproblem}
In this section we consider the one-player control problem.
We first recall results obtained by \citet{Alvarez} that play a fundamental role along the paper. 
These results consist in the determination of optimal control levels in an ergodic framework 
for a diffusion within the class of reflecting controls. 
We then prove that the optimal levels found in \cite{Alvarez} in fact give the optimal controls within the broader class of
of finite variation c\`adl\`ag controls.
\subsection{Diffusion}
Let us consider a filtered probability space 
$(\Omega,\mathcal{F},\lbrace \mathcal{F}_t\colon t\geq 0^-\rbrace,\P)$ 
that satisfy the usual assumptions.
In order to define the underlying diffusion consider the functions 
$\mu\colon\R\to\R$ and $\sigma\colon\R\to\R$ 
assumed to be locally Lipschitz. 
Under these conditions the stochastic differential equation 
\begin{equation}\label{sde}
dX_t=\mu (X_t)dt+ \sigma(X_t)dW_t , \ X_{0}=x_0
\end{equation}
has a unique strong solution up to an explosion time, 
that we denote by $X=\{X_t\colon t\geq 0^-\}$ (see \citet[Theorem V.38]{ProtterPE}).
Observe that our framework includes quadratic coefficients. 

As usual, we define the infinitesimal generator of the process $X$ as
$$
\mathcal{L}_X = \frac{1}{2} \sigma^2 (x) \frac{\displaystyle d^2}{\displaystyle d^2 x} + \mu(x)\frac{\displaystyle d}{\displaystyle dx}. 
$$
We denote the density of the scale  function $S(x)$ w.r.t the Lebesgue measure as
$$
S'(x)=\exp \left(-\int^x \frac{2\mu(u)}{\sigma^2(u)} du \right),
$$
and the density of the speed measure $m(x)$ w.r.t the Lebesgue measure as 
$$
m'(x)=\frac{2}{\sigma^2(x)S'(x)}.
$$
As mentioned above, the underlying process is controlled by a pair of processes, the admissible controls, that drive it to a convenient region, defined below. 
\begin{definition}\label{D:admissiblecontrols} 
An \emph{admissible control} is a pair of non-negative $\lbrace \mathcal{F}_t \rbrace$-adapted processes 
$\eta=(U=\{U_t\}_{t\geq 0^-},D=\{D_t\}_{t\geq 0^-})$ such that:
\begin{itemize}
\item[\rm(i)] Each process $U,D$ is right continuous and non decreasing almost surely.
\item[\rm(ii)] For each $t\geq 0^-$ the random variables $U_t$ and $D_t$ have finite expectation.
\item[\rm(iii)] For every $x \in \R$ the stochastic differential equation
\begin{equation}\label{sdec} 
dX^\eta_t:= \mu (X^\eta_t)dt + \sigma(X^\eta_t)dW_t +dU_t-dD_t, \quad X_{0^-}=x
\end{equation}
has a unique strong solution with no explosion in finite time. 
\end{itemize}
We denote by $\mathcal{A}$ the set of admissible controls.
\end{definition}
Note that condition (iii) is satisfied, for instance, when the coefficients are globally Lipschitz. 
(See the remark after Theorem V.38 in \cite{ProtterPE}.) 
Observe also that condition (ii) is not a real restriction, as, for instance, the integral in the cost function $G(x)$ in \eqref{ecf}
that we aim to minimize, in case of having infinite expectations, is infinite.  
A relevant sub-class of admissible controls is the set of reflecting controls.
\begin{definition}\label{D:reflectingX} 
For $a<b$ denote by $X^{a,b}=\lbrace X^{a,b}_t\colon t \geq 0 \rbrace$ 
the strong solution 
of the stochastic differential equation with reflecting boundaries at $a$ and $b$: 
$$
dX^{a,b}_t= \mu (X^{a,b}_t)dt + \sigma(X^{a,b}_t)dW_t+dU^a_t-dD^b_t , \qquad X_{0^-}=x. 
$$
Here $U^a=\{U^a_t\}, D^b=\{D^b_t\}$, are the local times of the reflected diffusion in the interval $[a,b]$. 
They are continuous non-decreasing processes that increase, respectively, only when the solution visits $a$  or $b$,
and make the controlled diffusion satisfy the condition $a\leq X^{a,b}_t\leq b$, a.s. for all 
$t\geq 0$. 
As the above equation has a strong solution (see \cite[Theorem 5.1]{saisho}), 
the pair $(U^a, D^b)$ 
belongs to $\mathcal{A}$, we call them \emph{reflecting controls}. 
If $x \notin (a,b)$, we begin the policy by sending the process to the closest point of the interval 
$[a,b]$
at time $t=0$. This is why we need to begin our evolution at $t=0^-$, in order to have c\`adl\`ag controls. 
\end{definition}
We introduce below the cost function $c(x,y)$ to be considered in the mean-field game formulation,
satisfying some natural conditions.
\begin{assumptions}\label{A:Assumptions2}
Assume that $c \colon \mathbb{R}^2 \rightarrow \mathbb{R}_+$ is a continuous function, 
and the positive constants $q_u, q_d$ are the unit cost of using the associated controls. 
Assume that, for each fixed $y \in \mathbb{R}$ there exist a value $x_y$  such that
\begin{equation*}
c(x,y)  \geq c(x_y,y)\geq 0, \qquad\text{for all $x \in \R$},
\end{equation*}
and positive constants $K_y$ and $\alpha_y$ such that
\begin{equation}\label{E:kalpha}
c(x,y)+K_y \geq \alpha_y \vert x \vert, \qquad \text{for all $x \in \R$}.
\end{equation}
Consider the maps
\begin{equation*}
\pi_1(x,y)=c(x,y)+q_d\mu(x), \hspace{10mm} \pi_2(x,y)=c(x,y)-q_u \mu(x),
\end{equation*}
and assume that for each fixed $y \in \mathbb{R}$:
\begin{itemize}
\item[\rm(i)]
There exists a unique real number $x^y_i = \argmin\lbrace \pi_i(x,y)\colon x\in\R \rbrace$ so that $\pi_i( \cdot ,y)$ is decreasing on  $(-\infty ,x^y_i)$  and
increasing on $(x^y_i, \infty)$, where $i=1,2$.
\item[\rm(ii)] The following limits hold:
\begin{equation}\label{E:pinecesidad}
\lim_{x\rightarrow \infty} \pi_1(x,y) = \lim_{x \rightarrow -\infty} \pi_2(x,y) = \infty.
\end{equation}
\end{itemize} 
\end{assumptions}
\begin{remark}
In the control problem case, when there is no aggregate of players, the cost function depends only on the first variable.
We then set the second variable above  to $y=0$, and denote $c(x)=c(x,0)$. 
This function satisfies $c(x)\geq c(x_0)\geq 0$ for some $x_0$,
$c(x)+K\geq\alpha |x|$, for some positive constants $K$ and $\alpha$, 
and the functions are $\pi_1(x)=c(x)+q_d\mu(x)$ and $\pi_2(x)=c(x)-q_u\mu(x)$
have their respective minima at $x^0_1,x^0_2$, and satisfy \eqref{E:pinecesidad}.
\end{remark}
\begin{definition}
We define the ergodic cost function as
\begin{equation}\label{ecf}
G(x)= \inf_{\eta \in \mathcal{A}} \limsup_{T\to\infty} \frac{1}{T} \E_x \left(  \int_0^T c(X^\eta_s)ds +q_u U_T +q_d D_T \right),  
\end{equation}
where $\eta=(U,D)$ is an admissible control in $\mathcal{A}$.
\end{definition}
The existence of a unique pair of optimal controls within the class of reflecting controls was obtained by \citet{Alvarez},
from where we borrow the notation and assumptions.
In the following result, we summarize (in a convenient way for our purposes) 
results of Lemma 2.1 and Theorem 2.3 from \cite{Alvarez}. 
Let us mention that condition \eqref{E:kalpha} is not necessary for \cite{Alvarez} results, 
we will use it in the sequel to prove optimality within the class of feasible controls.
\begin{theorem}[\cite{Alvarez}] \label{T:alvarezcost} 
Under Assumption \ref{A:Assumptions2}: 
\par\noindent{\rm(a)}
 If $a<b$ then
\begin{multline}\label{eq:cab}
\lim_{T \rightarrow \infty} \frac{1}{T} \E_x \left( \int_0^T c(X_s^{a,b}) ds  +q_u U^a_T + q_d D^b_T \right) \\
 =\frac{1}{m(a,b)} \left[  \int_a^b c(u)m(du)  +\frac{q_u}{S'(a)}+\frac{q_d}{S'(b)}    
\right]=:C(a,b). 
\end{multline}
\par\noindent{\rm(b)}
There is an unique pair of points $a^{\ast}<b^{\ast}$ that  satisfy the equations:   
\begin{itemize}
\item[\rm(i)] $\pi_1(b^{\ast}) = \pi_2(a^{\ast})$, 
\item[\rm(ii)] $ \int_{a^{\ast}}^{b^{\ast}} \left( \pi_1(t)-\pi_1(b^{\ast}) \right) m(dt) + \frac{\displaystyle q_u+q_d}{\displaystyle S'(a^{\ast})}  =0$.
\end{itemize}
Furthermore, the pair $(a^{\ast}, b^{\ast})\in (-\infty,x_2^0) \times (x_1^0, \infty)$ minimizes the expected long-run average cost within the class of reflecting controls.
\end{theorem} 
\begin{remark}
Condition {\rm(i)} is obtained from the fact that $X^{a,b}$ is stationary. Regarding equation {\rm(ii)}, it arises after differentiation in order to determine the minimum. The uniqueness of the solution is proved based on the properties of the cost function. 
Conditions {\rm(i)} and {\rm(ii)} here are equivalent to conditions (2.5) and (2.6) in \cite{Alvarez}
as they reduce to solving $C(a,b)-\pi_2(a)=\pi_1(b)-C(a,b)=0$, as seen in the proof of Lemma 2.1 in \cite{Alvarez}. See more details in \cite{Alvarez}.
\end{remark}

\subsection{Optimality within $\mathcal{A}$}

Optimality within the class $\mathcal{A}$ of càdlàg controls requires further analysis. 
As expected, and mentioned in \cite{Alvarez}, 
the optimal controls within class $\mathcal{A}$ are the same controls found in the class of reflecting controls. 
An analogous result to the one presented below was obtained for non-negative diffusions when considering a maximization problem in \cite{CDF} (see also \cite{KKXYZ}). 
More precisely, it is clear that
$$ 
\inf_{a<b}  \lim_{T \rightarrow \infty} \frac{1}{T} 
\E_x \left( \int_0^T c(X_s^{a,b}) ds +q_u U^b_T+q_d D^a_T  \right)  \geq G(x).
$$
Then, to establish the optimality within $\mathcal{A}$ it is necessary to obtain the other inequality.
This task is carried out with the help of the solution of the free boundary problem \eqref{free} below, similarly to \cite{CDF}. 
The mentioned differences with this situation require different hypotheses and slightly different arguments. 
\begin{theorem}[Verification]\label{T:Verification}
Consider a diffusion defined by \eqref{sde} and a cost function $c(x)$ satisfying Assumption \ref{A:Assumptions2}.
Suppose that there exist a constant $\lambda  \geq 0$ and a function $u \in C^2(\mathbb{R})$ such that
\begin{equation}\label{fbp}
(\mathcal{L}_X u) (x)+c(x) \geq \lambda, \qquad -q_u \leq u'(x) \leq q_d,\quad\text{for all $x\in\R$}. 
\end{equation}
Define the subset of 
admissible controls
\begin{equation}\label{liminf}
    \mathcal{B}= \left\lbrace \eta \in \mathcal{A}\colon \liminf_{T \rightarrow \infty} \frac{1}{T}\left|\E_x(u(X^\eta_T))\right|=0 \right\rbrace.
\end{equation}
Then, 
\begin{equation}\label{ergodic}
\inf_{\eta \in \mathcal{B}} \limsup_{T\to\infty} \frac{1}{T} \E_x \left(  \int_0^T c(X^\eta_s)ds +q_u U_T +q_d D_T \right)  
\geq\lambda.
\end{equation}
\end{theorem}
\begin{remark}
The consideration of the subclass $\mathcal{B}$ is not a restriction, as will be seen below. 
More precisely, it will be proved (using condition \eqref{E:kalpha}), 
that controls in
$\mathcal{A}\setminus\mathcal{B}$ give infinite values of the long run costs, 
being then not relevant in 
the computation of $G(x)$ in \eqref{ecf}.
\end{remark}
\begin{proof} Fix $T>0$.
For each $n\geq 1$ define the stopping times 
\[ 
T_n= \inf \lbrace t \geq 0 \colon \vert X^\eta_t \vert \geq n \rbrace\wedge T\nearrow T\quad a.s.
\] 
Using It\^{o} formula for processes with jumps 
(observe that the diffusion $X$ is continuous but the controls can have jumps, 
and in consequence the controlled processes $X^\eta$ can have jumps),
\begin{align}\label{E:verification1}
u(X^\eta(T_n)) & = u(x)+ \int_0^{T_n }u'(X^\eta_{s-} )  dX^\eta_s 
 +\frac{1}{2} \int_0^{T_n}u''(X^\eta_{s-})d \langle (X^\eta)^c,(X^\eta)^c \rangle _s \nonumber  \\
 &\quad +\sum_{s \leq T_n} \left( u(X^\eta_s)-u(X^\eta_{s-})   -u'(X^\eta_{s-} ) \bigtriangleup X^\eta_s \right). 
\end{align}
The r.h.s in \eqref{E:verification1} can be rewritten as
\begin{align}\label{E:verification2}
 u(x)& + \int_0^{T_n} (\mathcal{L}_X u)(X^\eta_{s-})ds -\int_0^{T_n}\mu(X^\eta_{s-})u'(X^\eta_{s-})  ds  \nonumber \\
 &+ \int_0^{T_n}u'(X^\eta_{s-}) d X^\eta_s   
 +\sum_{s \leq T_n} \left( u(X^\eta_s)-u(X^\eta_{s-})   -u'(X^\eta_{s-} ) \bigtriangleup X^\eta_s \right). 
\end{align}
Using the fact that $u'(X^\eta_{s-})=u'(X^\eta_s)$ in a set of total Lebesgue measure in $[0,T]$ almost surely,
and that $\bigtriangleup X^\eta_s= \bigtriangleup U_s-\bigtriangleup D_s$, we rewrite \eqref{E:verification2} as 
\begin{align}\label{E:verification3} 
u(x) &+ \int_0^{T_n} (\mathcal{L}_X u)(X^\eta_{s-})ds + \int_0^{T_n}u'(X^\eta_{s-})\sigma(X^\eta_{s-}) d W_s  \nonumber \\
&+ \int_0^{T_n} u'(X^\eta_{s-}) d (U_s-D_s)\nonumber  \\
   &+\sum_{s \leq T_n}\left( u(X^\eta_s)-u(X^\eta_{s-})  - u'(X^\eta_{s-})(\bigtriangleup U_s -\bigtriangleup D_s) \right). 
\end{align}
Therefore, denoting by $U_s^c$ and $D_s^c$ the continuous parts of the processes $U_s$ and $D_s$ respectively, 
and using the inequalities \eqref{fbp} in the hypothesis,  we obtain  
\begin{align*}
u(X^\eta (T_n)) & \geq u(x)+ \lambda T_n - \int_0^{T_n}c(X^\eta_{s-}) ds  
 +\int_0^{T_n} u'(X^\eta_{s-}) \sigma (X^\eta_{s-}) dW_s\\&\quad
 -\int_0^{T_n}q_u dU^c_s  
 -\int_0^{T_n} q_d dD^c_s - \sum_{0 \leq s \leq T_n} ( \bigtriangleup U_s q_u+\bigtriangleup D_s q_d)  \\
 &=u(x)+ \lambda T_n - \int_0^{T_n}c(X^\eta_{s-}) ds +\int_0^{T_n} u'(X^\eta_{s-}) \sigma (X^\eta_{s-}) dW_s \\
  &\quad-q_u U_{T_n}-q_d D_{T_n}.
\end{align*}
Rearranging the terms above and taking the expectation we obtain
\begin{multline*}
\E_x (u(X^\eta (T_n))) -u(x)+ 
\E_x \left(\int_0^{T_n} c(X^\eta_{s-})ds + q_u U_{T_n}+ q_d D_{T_n} \right)  \geq \lambda \E_x (T_n).
\end{multline*}
Taking first limit as $n$ tends to infinity, dividing then  by $T$,  and finally taking $\liminf$ as $T$ goes to infinity we obtain \eqref{ergodic}
concluding the proof of the verification theorem.
\end{proof}
Consideration of free boundary problems such as \eqref{fbp} in the framework of singular control problems can be found for example in \cite{Alvarez}, \cite{CDF}, and \cite{KKXYZ}. 
In \cite{Alvarez}, the author studied the same problem of this section and used a free boundary problem to find some useful properties of optimal controls. 
More precisely, under the same assumptions as above, 
to study the ergodic optimal control problem in the class of reflecting controls, 
the author considered the free boundary problem consisting of finding $a<b, \lambda$ and a function $u$ in $C^2(\mathbb{R})$ such that
\begin{equation}\label{free}
\left\{
\aligned
(\mathcal{L}_X u) (x) + c(x)&=\lambda,&\text{for all $ x \in (a,b)$},\\ 
u(x)&=q_d (x-b) +u(b),&\text{for all $x \geq b$},\\
u(x)&=q_u (a-x) +u(a),&\text{for all $x \leq a$}.    
\endaligned
\right.
\end{equation}
For this problem, it is proved (see Remark 2.4 in \cite{Alvarez}) that there exists a unique solution that satisfies \eqref{fbp}. 
Furthermore, 
\begin{equation}\label{E:lambda}
    \lambda=C(a,b),
\end{equation}
the ergodic cost defined by \eqref{eq:cab}, 
as states equation (2.15) in \cite{Alvarez}.
Here, similar to \cite{KKXYZ}, we use the results obtained in \cite{Alvarez} to get a suitable candidate to apply Theorem \ref{T:Verification}. 
\begin{theorem}\label{T:Alvarezcompletation}
Consider a diffusion defined by \eqref{sde}
and a cost function $c(x)$ satisfying Assumption \ref{A:Assumptions2}.
Then, the reflecting controls with levels given in {\rm(b)} in Theorem \ref{T:alvarezcost} minimize the ergodic  cost  $G(x)$ in \eqref{ecf} within the set $\mathcal{A}$ of admissible controls.
\end{theorem}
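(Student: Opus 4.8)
The plan is to produce the pair $(u,\lambda)$ required by the verification Theorem~\ref{T:Verification} directly from the free boundary problem \eqref{free}. Take $a=a^{\ast}$ and $b=b^{\ast}$ as in part~{\rm(b)} of Theorem~\ref{T:alvarezcost}, and let $(u,\lambda)$ be the associated solution of \eqref{free} supplied by \cite{Alvarez}; by the statement quoted after \eqref{free} it satisfies $(\mathcal{L}_X u)(x)+c(x)\ge\lambda$ on $\R$ and the slope bounds required in \eqref{fbp} (the two affine pieces of $u$ have slope $-q_u$ below $a^{\ast}$ and $q_d$ above $b^{\ast}$, and the $C^2$-pasting keeps $u'$ between these values on $(a^{\ast},b^{\ast})$), while $\lambda\ge 0$ since $c\ge c(0)\ge 0$ and the costs are nonnegative. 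The first routine check is that this $\lambda$ is exactly the long-run average cost of the reflecting policy $(a^{\ast},b^{\ast})$: using the identity $(\mathcal{L}_X u)(x)\,m'(x)=\bigl(u'/S'\bigr)'(x)$ and integrating $(\mathcal{L}_X u)(x)+c(x)=\lambda$ over $(a^{\ast},b^{\ast})$ against the speed measure, the boundary values $u'(a^{\ast})=-q_u$, $u'(b^{\ast})=q_d$ reproduce precisely the expression in Theorem~\ref{T:alvarezcost}{\rm(a)}, so $\lambda$ equals the minimal long-run average cost within the class of reflecting controls.

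Next I would verify the growth hypothesis \eqref{liminf}. Since $u$ is affine outside $[a^{\ast},b^{\ast}]$ with slopes bounded by $\max(q_u,q_d)$, there is a constant $C$ with $|u(x)|\le C(1+|x|)$, so it suffices to control $\tfrac1T\,\E_x|X^{D,U}_T|$. Here one may restrict to admissible controls of finite cost, since for a control with infinite cost the inequality to be proved is vacuous. For such a control, suppose toward a contradiction that $\liminf_{T\to\infty}\tfrac1T\,\E_x|X^{D,U}_T|=\delta>0$; then $\E_x|X^{D,U}_T|\ge\tfrac{\delta}{2}T$ for all large $T$, so by Tonelli $\tfrac1T\int_0^T\E_x|X^{D,U}_s|\,ds\to\infty$, and the coercivity bound $c(x)+K\ge\alpha|x|$ from Assumptions~\ref{A:assumptions:} then forces $\limsup_{T\to\infty}\tfrac1T\,\E_x\!\int_0^T c(X^{D,U}_s)\,ds=\infty$, contradicting finiteness of the cost. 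Hence $\liminf_{T\to\infty}\tfrac1T\,\E_x|X^{D,U}_T|=0$, and \eqref{liminf} holds for every finite-cost admissible control.

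Putting the pieces together: applying Theorem~\ref{T:Verification} with this $(u,\lambda)$ shows that every admissible control of finite cost has long-run average cost at least $\lambda$, and controls of infinite cost satisfy the same bound trivially; therefore $G(x)\ge\lambda$. Conversely $G(x)\le\lambda$, because the reflecting control with barriers $(a^{\ast},b^{\ast})$ is admissible and, by Theorem~\ref{T:alvarezcost}{\rm(a)}, has average cost exactly $\lambda$. Hence $G(x)=\lambda$ and the reflecting policy $(a^{\ast},b^{\ast})$ is optimal within $\mathcal{A}$. I expect the only genuinely delicate step to be the one in the second paragraph — upgrading finiteness of the \emph{time-averaged} cost to the pointwise-in-$T$ estimate on $\E_x|X^{D,U}_T|$ needed for \eqref{liminf} — and this is exactly where the linear coercivity of $c$ enters; one should also check that the possibly superlinear drift $\mu$ plays no role in that particular bound.
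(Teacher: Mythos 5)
Your proposal is correct and follows essentially the same route as the paper: both invoke the verification Theorem~\ref{T:Verification} with the solution $(u,\lambda)$ of the free boundary problem \eqref{free}, and both dispose of controls violating \eqref{liminf} by combining the linear growth of $u$ outside $[a^{\ast},b^{\ast}]$ with the coercivity $c(x)+K\ge\alpha|x|$ to show such controls have infinite average cost. Your restriction to finite-cost controls is just the contrapositive of the paper's argument on the set $\mathcal{B}^c$, and your extra check that $\lambda$ equals the value of the reflecting policy is a detail the paper delegates to \cite{Alvarez}.
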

\begin{proof}
Take $u$ as the solution of the free boundary problem \eqref{free} defined above. 
In view of Theorem \ref{T:Verification}, we need to prove that the infimum of the ergodic cost defining $G(x)$ is realized in the set $\mathcal{B}$ defined in \eqref{liminf}.
Take then  $\eta\in\mathcal{A}\setminus\mathcal{B}$.
By definition of $\mathcal{B}$,
there exist constants $\epsilon>0$ and $S>0$ such that
\begin{equation}\label{1s}
\E_x u(X^\eta_s)>\epsilon s,\quad\text{for all $s \geq S$}.
\end{equation}
The second statement in \eqref{fbp} implies that $|u(x)-u(0)|\leq (q_u+q_d)|x|$. From this, it follows
$$
c(x)\geq A u(x)-B,
$$
for $A=\alpha/(q_u+q_d)$ and $B=\alpha u(0)/(q_u+q_d)+K$, see \eqref{E:kalpha}.
In view of \eqref{1s}, this implies
\begin{equation*}
\limsup_{T\to\infty} \frac{1}{T} \E_x \left( \int_0^T c(X^\eta_s)ds \right) \geq   \limsup_{T\to\infty} 
\frac{1}{T} \int_S^T (A\epsilon s-B)ds =\infty.
\end{equation*}
As a consequence, for any $\eta\in\mathcal{A}\setminus\mathcal{B}$, we have 
$$
\limsup_{T \to \infty} \frac{1}{T}\E_x \left( \int_0^T c(X^\eta_s)ds+q_uU_T+q_d D_T\right) =\infty.
$$
Finally, as the class of reflecting controls gives finite ergodic limits by Theorem \ref{T:alvarezcost}, the infimum can be taken in the subclass $\mathcal{B}$.
So Theorem \ref{T:Verification} gives the equality $G(x)=\lambda=C(a,b)$ 
(see \eqref{E:lambda}), concluding the proof.
\end{proof}
\section{Mean-field Game Problem}\label{S:MFG}
As mentioned above, in the mean-field game formulation, 
the cost function depends on two variables, 
respectively the state of the player and the state of an aggregate of players referred to as \emph{the market}. 
The state of the market is the expectation of a continuous function of the diffusion process under some given controls.

The study of the existence and uniqueness of equilibrium points begins with the application of Theorem \ref{T:alvarezcost} 
when the state of the market is asymptotically constant. 
The cost function becomes one-dimensional and the results in \cite{Alvarez} can be applied.

More precisely, assuming $f(x)$ continuous,  
the expectation of the market diffusion $\E_x(f(X^{c,d}_t))$ has an ergodic limit, denoted $R(c,d)$, 
and applying the previous results, 
we can prove that the optimal controls for the player should be found in the class of reflecting controls,
considering a one variable cost function of the form $c(\cdot,R(c,d))$.
This is why we assume that the market is also controlled by reflections at some levels $c<d$,
and expect to obtain an equilibrium point when the optimal levels $a<b$ that control the player's diffusion coincide with $c<d$
(see Definition \ref{D:EquilibriumDefinition}).
Note that the question of the existence of equilibrium strategies beyond the class of reflecting controls is not addressed here.
The requirements to apply these results in the mean-field game formulation follow.
\subsection{Conditions for optimality and equilibrium}
In this setting, we can generalize the results of the section before using some simple ergodic results for diffusions.
Recall that the function $f(x)$ is assumed to be continuous.
\begin{definition}\label{D:EquilibriumDefinition}
We say that  a control $\eta^*$ is an \emph{equilibrium} of the mean-field game if it belongs to the set 
$$ 
\argmin_{\eta=(U,D) \in \mathcal{A}} \left\lbrace\limsup_{T \rightarrow \infty} \frac{1}{T} \E_x \left( \int_0^T c\big(X^\eta_s,\E_x(f(X_s^{\eta^*}))\big) ds+q_u U_T+
q_d D_T \right)\right\rbrace.
$$
In case the control is reflecting, i.e. $\eta^*=(U^{a^*},D^{b^*})$ we say that $(a^*,b^*)$ is an \emph{equilibrium point}.
\end{definition}
The idea of the above definition is to consider situations in which the individual player has no incentive to act differently to the market. 
Regarding the three-step proposal of \citet[Section 2.2]{carmona2013}, we would
(i) choose a control $\mu\in\mathcal{A}$ for the market, (ii) solve the standard stochastic problem
$$
\inf_{\eta=(U,D) \in \mathcal{A}} \left\lbrace\limsup_{T \rightarrow \infty} \frac{1}{T} \E_x \left( \int_0^T c\big(X^\eta_s,\E_x(f(X_s^{\mu}))\big) ds+q_u U_T+q_d D_T \right)\right\rbrace.
$$
to obtain a control $\eta$ (depending on $\mu$), 
and (iii) find a fixed point in $\mathcal{A}$ of the map $\mu\mapsto\eta$.  
Compared to Definition 3.2 in \cite{CDF}, 
closer to our formulation,
Definition \ref{D:EquilibriumDefinition} admits a time dependent value representing the market state. 
More precisely, in \cite{CDF}, the authors consider situations 
in which the controlled market process has a stationary distribution, 
whose mean has to coincide with the equilibrium value. 
If this is the case, as seen in Section \ref{S:theproblem}, 
the control to be an equilibrium, in general terms, should be a reflecting one.
Nevertheless, as the following results shows, when considering reflecting controls, we can substitute the time dependent value by its limit in Definition \ref{D:EquilibriumDefinition}.
%
\begin{theorem}\label{T:Limitcost}
Consider the points $ a<b$, $c<d$, and $x \in \mathbb{R}$.
Then 
\begin{multline}
\limsup_{T \rightarrow \infty} \frac{1}{T} \E_x \left( \int_0^T c(X_s^{a,b},\E_x\big(f(X_s^{c,d}))\big) ds+q_d dD^b_s+q_u dU_s^a \right) \\
=\frac{1}{m(a,b)} \left[  \int_a^b c(u,R(c,d))m(du) + \frac{q_u}{S'(a)}+\frac{q_d}{S'(b)} \right], 
\label{thm}
\end{multline}
where 
$$
R(c,d)= \int_c^d\frac{f(u)}{m(c,d)}m(du).
$$
\end{theorem}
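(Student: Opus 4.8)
The plan is to decouple the two diffusions and exploit their independence, so that the statement reduces to Theorem \ref{T:alvarezcost}(a) applied to the $X$-process alone with a modified (constant-in-the-second-argument) cost. First I would observe that, since $\{X_t\}$ and $\{Y_t\}$ are independent, for every fixed $s$ the random variable $\E_x(f(Y_s^{c,d}))$ is a deterministic number — call it $r_s$ — not a random variable; hence the integrand $c(X_s^{a,b}, \E_x(f(Y_s^{c,d})))$ is just $c(X_s^{a,b}, r_s)$, a function of the single process $X^{a,b}$ with a time-dependent but nonrandom parameter. The key analytic input is then the ergodic behaviour of $r_s$: the reflected diffusion $Y^{c,d}$ on the interval $[c,d]$ is positive recurrent with stationary (normalized speed) distribution $m(du)/m(c,d)$ on $(c,d)$, so by the ergodic theorem for one-dimensional reflected diffusions, $r_s = \E_x(f(Y_s^{c,d})) \to \int_c^d f(u)\,m(du)/m(c,d) = R(c,d)$ as $s\to\infty$, uniformly in the (irrelevant, since the process is instantly sent inside) starting point.

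Next I would pass this convergence through the Cesàro average. Writing $g(x,r) := c(x,r)$, continuity of $c$ together with $r_s \to R(c,d)$ gives, for the reflected process $X^{a,b}$ which also has stationary distribution $m(du)/m(a,b)$ on $(a,b)$,
\begin{equation*}
\frac{1}{T}\int_0^T \E_x\big(c(X_s^{a,b},r_s)\big)\,ds \;\longrightarrow\; \int_a^b c(u,R(c,d))\,\frac{m(du)}{m(a,b)}.
\end{equation*}
The cleanest way to justify this is to split $c(X_s^{a,b},r_s) = c(X_s^{a,b},R(c,d)) + \big[c(X_s^{a,b},r_s)-c(X_s^{a,b},R(c,d))\big]$: the time-average of the first term converges to the claimed integral directly by Theorem \ref{T:alvarezcost}(a) (its ``running cost'' part) applied to the one-variable cost $u\mapsto c(u,R(c,d))$, while the second term is handled by noting that $X^{a,b}$ lives in the compact interval $[a,b]$, so $c$ is uniformly continuous there and the bracket is bounded by a modulus-of-continuity term $\omega(|r_s-R(c,d)|)\to 0$, whose Cesàro average therefore vanishes. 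The control-cost terms $\frac{q_u}{S'(a)}$ and $\frac{q_d}{S'(b)}$ are untouched by the second variable and come verbatim from Theorem \ref{T:alvarezcost}(a).

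The main obstacle is the justification that one may replace the $\limsup$ by an honest limit and interchange it with the expectation and the time-integral — i.e. the uniform-integrability / recurrence bookkeeping. Concretely, one must ensure that the convergence $r_s\to R(c,d)$ is genuine (this needs $Y^{c,d}$ to be positive recurrent on $[c,d]$ and $f$ integrable against the speed measure, both of which hold since $[c,d]$ is compact and $f$ is measurable, hence bounded on $[c,d]$), and that the occupation measures $\frac1T\int_0^T \P_x(X_s^{a,b}\in\cdot)\,ds$ converge weakly to the stationary law — this is exactly the content underlying Theorem \ref{T:alvarezcost}(a), so it can be invoked rather than reproved. Once the compactness of $[a,b]$ and $[c,d]$ is used to get boundedness of $c$ and uniform continuity, no integrability subtleties remain and the $\limsup$ is in fact a limit; I would remark this explicitly. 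The remaining steps are the routine substitutions indicated above.
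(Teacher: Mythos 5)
Your proposal is correct and follows essentially the same route as the paper: both apply Theorem \ref{T:alvarezcost}(a) to the one-variable cost $c(\cdot,R(c,d))$ and then show the Cesàro average of the remainder vanishes by bounding it with a deterministic, uniformly-continuous-in-$y$ error term (your modulus of continuity $\omega(|r_s-R(c,d)|)$ is exactly the paper's $H(\E_x(f(Y_s^{c,d})))$) that tends to zero because the transition law of the reflected diffusion $Y^{c,d}$ converges to its stationary speed-measure distribution. The only cosmetic slip is the aside that $f$ is ``measurable, hence bounded on $[c,d]$'' --- measurability alone does not give boundedness --- but the paper makes the same implicit regularity assumption on $f$, so this does not affect the comparison.
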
 
\begin{proof} Applying Theorem \ref{T:alvarezcost} with  the cost function $c(\cdot,R(c,d))$ 
we obtain that
\begin{multline*}
\lim_{T \rightarrow \infty} \frac{1}{T} \E_x \left(\int_0^T c(X_s^{a,b},R(c,d))ds+q_u U_T^a+q_d D^b_T\right)\\
=\frac{1}{m(a,b)} \left[\int_a^b c(u,R(c,d))m(du)
+ \frac{q_u}{S'(a)}+\frac{q_d}{S'(b)}\right],
\end{multline*}
i.e. the r.h.s. in \eqref{thm}.
It remains then to verify that
\begin{equation}\label{E:limitcostiszero}
\limsup_{T \rightarrow \infty} \frac{1}{T} \E_x \left( \int_0^T \vert c(X_s^{a,b},\E_x(f(X_s^{c,d})))-c(X_s^{a,b},R(c,d))\vert   ds\right) =0.
\end{equation}
In order to do this, define the continuous function $H\colon f([c,d]) \rightarrow \mathbb{R}^{+}$ by
\[
H(y)= \max_{u \in [a,b]}  \vert c(u,y)-c(u,R(c,d)) \vert,
\]
and observe that the limit in \eqref{E:limitcostiszero} can be bounded by
\begin{multline*}
\limsup_{T \rightarrow \infty} \frac{1}{T}\int_0^T  H(\E_x\big(f(X_s^{{c},{d}})))ds \\
=\limsup_{T \rightarrow \infty} \frac{1}{T}  \int_0^T    H \left( \int_c^d f(y) \P_s (x,dy)\right)ds,  \end{multline*}
with $\P_s(x, dy)= \P_x(Y^{{c},{d}}_s \in dy)$. 
This limit is zero because 
$$
H \Bigg( \int_c^d f(y) \P_s (x,dy)\Bigg)\to H(R({c},{d}))=0, 
$$  
as  $H$ is uniformly continuous, bounded and  
$$ 
\left\Vert  \P_s(x, \cdot)- \frac{1}{m({c},{d})}m(\cdot)  \right\Vert \rightarrow 0,\qquad\text{as $s \to \infty$},
$$
with the norm of total variation (see Theorem 54.5 in \cite{RW}). 
It follows that \eqref{E:limitcostiszero} holds, concluding the proof.
\end{proof}
The existence and uniqueness of minimizers given in (b) in Theorem \ref{T:alvarezcost} can also be generalized, 
by noticing that in Theorem \ref{T:Limitcost} the second variable in the cost function is fixed. 
The optimality of reflecting controls within the class of càdlàg controls corresponding to Definition \ref{D:EquilibriumDefinition}
follows from Theorem \ref{T:Alvarezcompletation}.
\begin{theorem}\label{T:optimal}
For a fixed $({a},{b})$, the infimum of the ergodic problem is reached only at a pair $(a^{\ast},b^{\ast})$ such that 
\begin{itemize}
\item[\rm(i)]
$\pi_1(b^{\ast},R({a},{b}))= \pi_2(a^{\ast},R({a},{b})),$
\item[\rm(ii)]
$ \displaystyle\int_{a^{\ast}}^{b^{\ast}} \left( \pi_1(t,R({a},{b}))-\pi_1(b^{\ast},R({a},{b})) \right) m(dt) + \frac{\displaystyle q_u+q_d}{\displaystyle S'(a^{\ast})}=0. $
\end{itemize}
Moreover $(a^{\ast},b^{\ast}) \in (-\infty,x_2^{R(a,b)}) \times (x_1^{R(a,b)},\infty)$
\end{theorem}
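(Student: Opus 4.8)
The plan is to reduce Theorem \ref{T:optimal} to Theorem \ref{T:alvarezcost}(b) and Theorem \ref{T:Alvarezcompletation} by exploiting the crucial observation, already used in the proof of Theorem \ref{T:Limitcost}, that once the reflecting levels $(a,b)$ of the market diffusion $Y^{a,b}$ are held fixed, the ergodic expectation $\E_x(f(Y_s^{a,b}))$ converges to the constant $R(a,b)$, and hence the effective cost function seen by the player is the one-variable function $x\mapsto c(x,R(a,b))$. First I would verify that this one-variable cost function satisfies Assumptions \ref{A:assumptions:} with the constant $R(a,b)$ playing the role of the fixed second argument: continuity is immediate, the lower bound $c(x,R(a,b))\ge c(x_{R(a,b)},R(a,b))\ge 0$ and the linear growth bound $c(x,R(a,b))+K_{R(a,b)}\ge\alpha_{R(a,b)}|x|$ come directly from Assumptions \ref{A:Assumptions2}, and conditions (i)–(ii) on $\pi_1(\cdot,R(a,b))$ and $\pi_2(\cdot,R(a,b))$ are exactly the fixed-$y$ hypotheses in Assumptions \ref{A:Assumptions2} with $y=R(a,b)$.

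Next I would invoke Theorem \ref{T:alvarezcost}(b) applied to the cost $c(\cdot,R(a,b))$: this yields a unique pair $a^\ast<b^\ast$ (here one should note $a^\ast\le b^\ast$ suffices for the equilibrium definition, with equality a degenerate case) satisfying precisely the two displayed equations (i) and (ii) of the statement, since $\pi_i(\cdot,R(a,b))$ here coincides with the $\pi_i$ of Assumptions \ref{A:assumptions:} built from $c(\cdot,R(a,b))$. Moreover that theorem tells us $(a^\ast,b^\ast)$ minimizes the long-run average cost within the class of reflecting policies for the cost $c(\cdot,R(a,b))$, and by Theorem \ref{T:Limitcost} the long-run average cost of the reflecting policy $(D^{b'},U^{a'})$ against the fixed market $(a,b)$ equals the right-hand side of \eqref{thm} with $(c,d)$ replaced by $(a',b')$, i.e. exactly the long-run average cost of $(a',b')$ for the one-variable cost $c(\cdot,R(a,b))$. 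Hence minimization over reflecting policies in the mean-field functional of Definition \ref{D:EquilibriumDefinition} is the same optimization problem as in Theorem \ref{T:alvarezcost}(b), and its unique solution is $(a^\ast,b^\ast)$.

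Finally I would upgrade the optimality from the class of reflecting controls to the full class $\mathcal{A}$ of càdlàg admissible controls. For this I would apply Theorem \ref{T:Alvarezcompletation} to the one-variable cost $c(\cdot,R(a,b))$, which we have just checked satisfies Assumptions \ref{A:assumptions:}; this gives that the reflecting controls with barriers $(a^\ast,b^\ast)$ minimize the ergodic cost for $c(\cdot,R(a,b))$ over all of $\mathcal{A}$. Combining this with \eqref{E:limitcostiszero}—which shows that for \emph{any} admissible $(D,U)$ the mean-field functional against the fixed market $(a,b)$ and the one-variable functional with cost $c(\cdot,R(a,b))$ differ by a term whose Cesàro average vanishes—we conclude that the infimum in Definition \ref{D:EquilibriumDefinition} over $\mathcal{A}$ is attained, and only at $(a^\ast,b^\ast)$. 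The main obstacle is making the last step fully rigorous: \eqref{E:limitcostiszero} was stated for reflecting controls $X^{a,b}$, and to transfer it to an arbitrary $(D,U)\in\mathcal{A}$ one needs the bound $|c(X_s^{D,U},\E_x f(Y_s^{a,b}))-c(X_s^{D,U},R(a,b))|\le H(\E_x f(Y_s^{a,b}))$ to hold with $H$ depending only on the (fixed, deterministic) market trajectory and not on the player's state excursions, which is true because $H(y):=\sup_{u\in\R}|c(u,y)-c(u,R(a,b))|$ is finite and $y\mapsto H(y)$ remains uniformly continuous and bounded on the compact range of $s\mapsto\E_x f(Y_s^{a,b})$; one must also confirm this supremum over $u\in\R$ is finite under Assumptions \ref{A:Assumptions2}, or alternatively restrict attention to controls in the set $\mathcal{B}$ of Theorem \ref{T:Alvarezcompletation} where the player's process has controlled growth, exactly as in that proof.
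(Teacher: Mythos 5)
Your proposal is correct and follows essentially the same route as the paper, which proves Theorem \ref{T:optimal} simply by observing that the second argument of the cost is frozen at $R(a,b)$ and then invoking Theorem \ref{T:alvarezcost}(b) together with Theorem \ref{T:Alvarezcompletation} for the one-variable cost $c(\cdot,R(a,b))$. Your additional care in checking Assumptions \ref{A:assumptions:} for $c(\cdot,R(a,b))$ and in transferring the vanishing-error estimate \eqref{E:limitcostiszero} to arbitrary admissible controls fills in details the paper leaves implicit, but does not change the argument.
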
 
Based on this result we obtain a condition for equilibrium of the mean-field game (see Definition \ref{D:EquilibriumDefinition}).
\begin{theorem}\label{L:Equilibriumpoint}
A pair $a<b$ is an equilibrium point if and only if
\begin{itemize}
\item[\rm(i)]
$\pi_1(b,R(a,b))= \pi_2(a,R(a,b)),$
\item[\rm(ii)]
$ \displaystyle\int_{a}^b \left[ \pi_1(t,R(a,b))-\pi_1(b,R(a,b)) \right] m(dt) + \frac{\displaystyle q_u+q_d}{\displaystyle S'(a)}=0.$
\end{itemize}
Moreover $(a,b) \in (-\infty,x_2^{R(a,b)}) \times (x_1^{R(a,b)},\infty)$
\end{theorem}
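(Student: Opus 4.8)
The plan is to derive the equilibrium characterization directly from the definition of an equilibrium point (Definition~\ref{D:EquilibriumDefinition}) together with Theorems~\ref{T:Limitcost} and~\ref{T:optimal}. The key observation is that Definition~\ref{D:EquilibriumDefinition} says precisely that the reflecting controls at levels $(a,b)$ are optimal (within $\mathcal{A}$) for the one-player ergodic problem whose running cost is $c\big(\cdot,\E_x(f(Y^{a,b}_s))\big)$ — that is, the market is frozen at the diffusion $Y^{a,b}$ reflected at the \emph{same} levels $(a,b)$. So the whole statement reduces to unwinding what ``$(a,b)$ is optimal for its own induced problem'' means in the analytic language already established.

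First I would fix the candidate pair $(a,b)$ and consider the associated frozen-market control problem with cost $c\big(\cdot,\E_x(f(Y^{a,b}_s))\big)$. By Theorem~\ref{T:Limitcost}, for \emph{any} admissible reflecting levels $a'<b'$ the long-run average cost of using $(a',b')$ against this frozen market equals
\[
\frac{1}{m(a',b')}\left[\int_{a'}^{b'} c\big(u,R(a,b)\big)\,m(du)+\frac{q_u}{S'(a')}+\frac{q_d}{S'(b')}\right],
\]
i.e. it coincides with the long-run average cost in the one-dimensional problem with cost function $c(\cdot,R(a,b))$. Moreover, by Theorem~\ref{T:Alvarezcompletation} (applied to this one-dimensional cost, which satisfies Assumptions~\ref{A:assumptions:} for the fixed second argument $R(a,b)$ thanks to Assumptions~\ref{A:Assumptions2}), the infimum over \emph{all} of $\mathcal{A}$ is attained within the reflecting class and equals the infimum over reflecting pairs. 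Hence $(a,b)$ is an equilibrium point if and only if $(a,b)$ itself minimizes the displayed expression over $a'<b'$ — equivalently, if and only if $(a,b)$ is the optimal reflecting pair for the one-dimensional cost $c(\cdot,R(a,b))$.

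Next I would invoke Theorem~\ref{T:optimal} (which is just Theorem~\ref{T:alvarezcost}(b) applied to the one-dimensional cost $c(\cdot,R(a,b))$, with $\pi_i(\cdot,R(a,b))$ in the role of $\pi_i$): the unique minimizing reflecting pair $(a^\ast,b^\ast)$ for that cost is characterized by the two equations
\[
\pi_1\big(b^\ast,R(a,b)\big)=\pi_2\big(a^\ast,R(a,b)\big),\qquad
\int_{a^\ast}^{b^\ast}\!\!\big(\pi_1(t,R(a,b))-\pi_1(b^\ast,R(a,b))\big)\,m(dt)+\frac{q_d+q_u}{S'(a^\ast)}=0.
\]
By the uniqueness in Theorem~\ref{T:alvarezcost}(b), $(a,b)$ is the optimal pair for the frozen problem if and only if $(a,b)=(a^\ast,b^\ast)$, which happens exactly when $(a,b)$ satisfies conditions (i) and (ii) of the statement. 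Chaining these equivalences gives the claim.

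The one genuinely delicate point is the passage from ``optimal within reflecting controls'' to ``optimal within $\mathcal{A}$'': one must check that Theorem~\ref{T:Alvarezcompletation} really applies to the frozen cost $c(\cdot,R(a,b))$, i.e. that this function still satisfies Assumptions~\ref{A:assumptions:}. The lower bound $c(x,y)\ge c(x_y,y)\ge 0$ and the linear growth bound $c(x,y)+K_y\ge\alpha_y|x|$ from Assumptions~\ref{A:Assumptions2} supply exactly the two inequalities needed, and part (i)–(ii) of Assumptions~\ref{A:Assumptions2} give the unimodality and the limits for $\pi_i(\cdot,R(a,b))$; so this is a matter of bookkeeping rather than a real obstruction. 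A second, even more minor, point is the boundary case $a=b$, which is excluded by hypothesis ($a<b$), so no degenerate reflection interval arises. I would present the argument as the clean chain of ``if and only if'' statements above, with the verification of Assumptions~\ref{A:assumptions:} for the frozen cost relegated to a single sentence.
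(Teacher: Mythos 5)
Your proposal is correct and follows exactly the route the paper intends: the paper states this theorem without an explicit proof, deriving it immediately from Definition~\ref{D:EquilibriumDefinition} together with Theorem~\ref{T:Limitcost} (freezing the market at its ergodic mean $R(a,b)$), Theorem~\ref{T:Alvarezcompletation} (optimality of reflecting controls within $\mathcal{A}$ for the resulting one-dimensional cost), and Theorem~\ref{T:optimal} (the characterization and uniqueness of the best response), so that equilibrium means the best response to $(a,b)$ is $(a,b)$ itself. Your write-up simply makes this chain of equivalences explicit, including the bookkeeping check that $c(\cdot,R(a,b))$ satisfies Assumptions~\ref{A:assumptions:}.
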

\subsection{The multiplicative case}\label{Subsectionmultplicative}
In this subsection, we assume that the cost function has a multiplicative form. 
\begin{assumptions}\label{assumption35}
The cost function  satisfying Assumption \ref{A:Assumptions2}, is factorized as
\[ 
c(x,y)= g(x)h(y),
\]
where the factors satisfy
\begin{itemize}
\item[\rm(i)] $g\colon\R\to[0,\infty)$ is a convex function, with $g(x)\geq g(0)$, 
\item[\rm(ii)] $h\colon\R\to(0,\infty)$ is continuous, with $h(x)\geq h(0)$.
\end{itemize}
\end{assumptions}
Note that such a multiplicative decomposition is particularly natural when $g(x)$ is interpreted as a standardized representation of the units of a good corresponding to a state $x$ and $h(y)$ as the factor modeling the unit cost based in the market.

We give a first result that follows from Theorem \ref{L:Equilibriumpoint} if the cost function is multiplicative. 
In this situation, using condition (i), one of the variables can be obtained as a function of the other.
For this purpose, consider the set
\begin{equation*}
C_a = \lbrace b \in \mathbb{R} \colon b > x_1^{R(a,b)} \vee a, \ x_2^{R(a,b)} > a, \ \pi_1 (b,R(a,b))=\pi_2(a,R(a,b)) \rbrace .  
\end{equation*}
Observe that if $C_a=\emptyset$, there are no equilibrium points.
We then assume condition $C_a \neq \emptyset$ if and only if $a \leq 0$. 
This means that we search for the equilibrium points in a connected set. 
Furthermore, for a fixed $a\leq 0$ we denote 
\begin{equation}
\label{E:firstconditionforequilibrium}
\rho(a)=\inf C_a,
\end{equation}
and
$$
L(a)=R(a,\rho(a)).
$$
\begin{proposition}\label{T:Equilibrium}
Suppose that the cost function factorizes as in Assumption \ref{assumption35},
and there exists a point $a_0 \leq 0$ such that the function $\rho $ defined via \eqref{E:firstconditionforequilibrium} is continuous in $(-\infty, a_0]$. 
Then,
\begin{itemize}
\item[$\mathrm(C_1)$]  if
$$ \int_{a_0}^{\rho(a_0)} (\pi_1 (t,L(a_0))-\pi_1(\rho(a_0),L(a_0)))m(dt) +\frac{q_u+q_d}{S'(a_0)} \geq   0, $$
then there is at least one equilibrium point. 
\item[$\mathrm(C_2)$] Furthermore, if  in $(-\infty, a_0]$, 
\begin{multline*}
     \pi_2 (t,L(a_2))- \pi_2(a_2,L(a_2)) < \pi_2 (t,L(a_1))- \pi_2(a_1,L(a_1))  \nonumber   \\  \forall (a_2,a_1,t) \ \text{ s.t, } a_2<a_1<t \leq a_0,  \end{multline*}
\begin{multline*}
    \pi_1(t,L(a_2) )- \pi_1(\rho(a_2),L(a_2)) < \pi_1(t,L(a_1) )- \pi_1(\rho(a_1),L(a_1)) \nonumber \\ \forall (a_2,a_1,t) \ \text{ s.t. }  \rho(a_2) >\rho(a_1) > t  \geq a_0, 
    \end{multline*}
and
\begin{align}\label{eq:c2condition2}
 \int_r^{l} (\pi_1 (t,R(r,l))-\pi_1(l,R(r,l)))m(dt)+ \frac{q_u+q_d}{S'(r)} >0, \nonumber \hspace{15mm} \\  \forall r \in (a_0,\rho(a_0)), l >r , \  \pi_1 (l,R(r,l))=\pi_2(r,R(r,l)),
\end{align}
then the equilibrium is unique.
\end{itemize}
\end{proposition}
\begin{proof}
For the existence of equilibrium points, we need to prove 
\begin{equation*}
    \int_{A}^{\rho(A)} (\pi_1 (t,L(A))-\pi_1(\rho(A),L(A)))m(dt) +\frac{q_u+q_d}{S'(A)} <  0, 
\end{equation*}
for some $A<a_0$. First, observe that the inequality can be rewritten as
\begin{multline}
  \int_{A}^{0} (\pi_2 (t,L(A))-\pi_2(A,L(A)))m(dt) \\
  +\int_{0}^{\rho(A)} (\pi_1 (t,L(A))-\pi_1(\rho(A),L(A)))m(dt) +\frac{q_u+q_d}{S'(0)}<0.    
\end{multline}
    Furthermore,
    due to the nature of the multiplicative cost, the points $x_i^y,i=1,2$ defined in \eqref{A:Assumptions2} can be taken all equal to  $x_i^0$ for each $i$ respectively. Thus, for $A$
 negative enough, both integrands are always negative and tend to $-\infty$ when $A \rightarrow -\infty$.  
Finally, for the uniqueness, condition $(C_2)$ implies that the map defined in $(-\infty,a_0]$:
$$
a  \rightarrow \int_{a}^{\rho(a)} (\pi_1 (t,L(a))-\pi_1(\rho(a),L(a)))m(dt) +\frac{q_u+q_d}{S'(a)}, 
$$ 
is monotone, thus concluding that the root of this map is unique.
\end{proof}
\begin{remark}
    Condition $\mathrm(C_2)$ is a condition on differences of value functions. 
    In particular,
    if we assume $\pi_2 \in C^2((-\infty,a_0) \times  \mathbb{R}) $, $f$ defined in the introduction of the section is increasing and $L(a)$ is increasing, then the first inequality in condition $\mathrm(C_2)$ holds if   $\pi_2$ has negative cross second derivative in $(-\infty, a_0) \times \mathbb{R}$ which is equivalent to the function 
    $$(a, \mu) \rightarrow \pi_2(a, \langle f,\mu\rangle), \quad  a \in (-\infty,a_0), \ \mu \text{ a  probability measure,} $$
    being submodular (see Example $2$ of
    \cite[Assumption 2.9]{DFFN}). 
    A similar analysis can be made with the second inequality (the function in this case is supermodular).
 \end{remark}
In the particular case of a diffusion without drift, the conditions of the previous proposition are satisfied under the following simple conditions.
\begin{corollary}
Suppose that the cost function factorizes as in Assumption \ref{assumption35}.
Assume furthermore that
$g$ is unbounded, convex and with minimum at zero, and the diffusion process \eqref{sde} has no drift. 
Then,
\vskip1mm\par\noindent
{\rm(a)} the function $\rho(a)$ is defined as the unique solution of the equation 
$h(a)=h(b)$, with $a \leq 0 \leq b$,
and there exists an equilibrium point,
\vskip1mm\par\noindent
{\rm(b)}
if the function
$h(R(a,\rho(a)))$
is strictly decreasing for $a\leq 0$, the equilibrium is unique. 
\end{corollary}
\begin{proof}
Take  $a_0= 0 $.  
We have that $\pi_1(b,R(a,b))=\pi_2 (a,R(a,b))$ is equivalent to the equality $g(b)=g(a)$, 
thus from the fact that $g$ is convex with a minimum at zero, 
the restriction of $g$ to $x<0$ is an invertible function, 
denote it by $g_{\vert_{(-\infty,0)}}$, 
and we can define
$$
\rho(a)=\left(g_{\vert_{(-\infty,0)}}\right)^{-1}(a).
$$  
We conclude part (a)  from the fact $\rho(0)=0$ and condition $(C_1)$ and is fulfilled.
Condition $(C_2)$ is verified, the first two statements follow from the monotonicity of $h$ and $a \rightarrow g(a,R(a))$ because the inequalites can be rewritten as: 
\begin{multline*}
     (g(t)-g(a_2))h(R(a_2,\rho(a_2))) < (g(t)-g(a_1))h(R(a_1,\rho(a_1))) \nonumber   \\  \forall (a_2,a_1,t) \ \text{ s.t, } a_2<a_1<t \leq 0,  
     \end{multline*}
\begin{multline*}
    (g(t) - g(\rho(a_2)) )h(R((a_2),\rho(a_2))) < (g(t)-g(a_1))h(R(a_1,\rho(a_1))) \nonumber \\ \forall (a_2,a_1,t) \ \text{ s.t. }  \rho(a_2) >\rho(a_1) > t  \geq 0. 
    \end{multline*}
The third integral \eqref{eq:c2condition2} condition in $(C_2)$ is automatic, as $(a_0,\rho(a_0))=(0,0)$.
\end{proof}
\section{Examples}\label{S:examples}
We present below several examples where the equations of Theorem \ref{L:Equilibriumpoint} can be expressed more explicitly and solved numerically.
To help the presentation, for each example, we plot in an $(a,b)$ plane the implicit curves defined by these equations. 
To this end, we write equation (i) in Theorem \ref{L:Equilibriumpoint} as
$$
F(a,b)=\pi_1(a,R(a,b))-\pi_2(b,R(a,b))=0,
$$
and draw first the set of its solutions. 
We then draw the set determined by condition \rm{(ii)}. 
Note that there are cases where there is an intersection of both curves outside the set 
$\lbrace a < b \rbrace$, these points are of no interest for our problem.
In all examples the function affecting the market expectation is $f(x)=x$.
Furthermore, to ease of exposition, we present the conclusions and the plots and defer the computations to the Appendix (see Subsection \ref{calc}).
\subsection{Examples with multiplicative cost}
The cost function now has the form
\begin{equation}\label{E:cost}
    c(x,y)= \max (-\lambda x,x) (1+ \vert y \vert^{\beta}),\quad  \lambda >0,\quad \beta \geq 1, 
\end{equation}
and $q_d \lambda = q_u$.
\begin{remark}
In this scenario the value $\max (-\lambda x, x)$ could represent the  maintenance cost of certain property done by a third party. This third party will change the price of its services depending on the demand of the market. 
\end{remark}
We consider a mean reverting process $X=\{X_t\}$ that follows the stochastic differential equation
\begin{equation}\label{ou}
dX_t= -\theta X_t dt+ \sigma(X_t) dW_t, 
\end{equation}
such that $\sigma$ is a function that satisfies the conditions of Section \ref{S:theproblem} and $q_d \theta <1$. Under these conditions the function $c(x,y)$ is under Assumptions \ref{A:Assumptions2}. First observe that if we take $x^y =0$ for all $y \in \mathbb{R}$, then $c(x,y ) \geq c(x^y,y)=0$, Second, by taking $K_y=0, \ \alpha_y= \lambda \wedge 1$ for all $y \in \mathbb{R}$,  condition \eqref{E:kalpha} is satisfied. Finally observe that for every $y \in \mathbb{R}$ the maps   $
\pi_1(x,y), \ \pi_2(x,y) $ 
are decreasing on $x$ in $(-\infty,0)$, increasing on $x$ in $(0,\infty)$ and both conditions $\rm{(i)}$ and $\rm{(ii)}$ in Assumptions \ref{A:Assumptions2} are satisfied.

In the particular case when $\sigma$ is constant, we can compute
\[ 
R(a,b)= \sqrt{\frac{\sigma^2 }{ \theta \pi}} \left(  \frac{\displaystyle e^{-a^2 \frac{\theta}{\sigma^2}}  -e^{-b^2 \frac{\theta}{\sigma^2}}   }{\mathrm{erf}\left(\sqrt{\frac{\theta}{\sigma^2}} b\right)    - \mathrm{ erf} \left( \sqrt{\frac{\theta}{\sigma^2}} a \right) }                        \right), 
\]
where $\mathrm{erf}(x)=\frac1{\sqrt{2\pi}}\int_{-\infty}^xe^{-y^2/2}\,dy$.
Using Proposition \ref{T:Equilibrium}, existence of equilibrium points holds.
Furthermore, if $\sigma$ is even then uniqueness also holds. 
Again, the calculations are in Appendix \ref{S:SubSubsectionOUMC}. 
In the graphical examples below $\sigma$ is constant. 
\begin{figure}[H]
\centering
\begin{subfigure}{.5\textwidth}
  \centering
  \includegraphics[width=.8\linewidth]{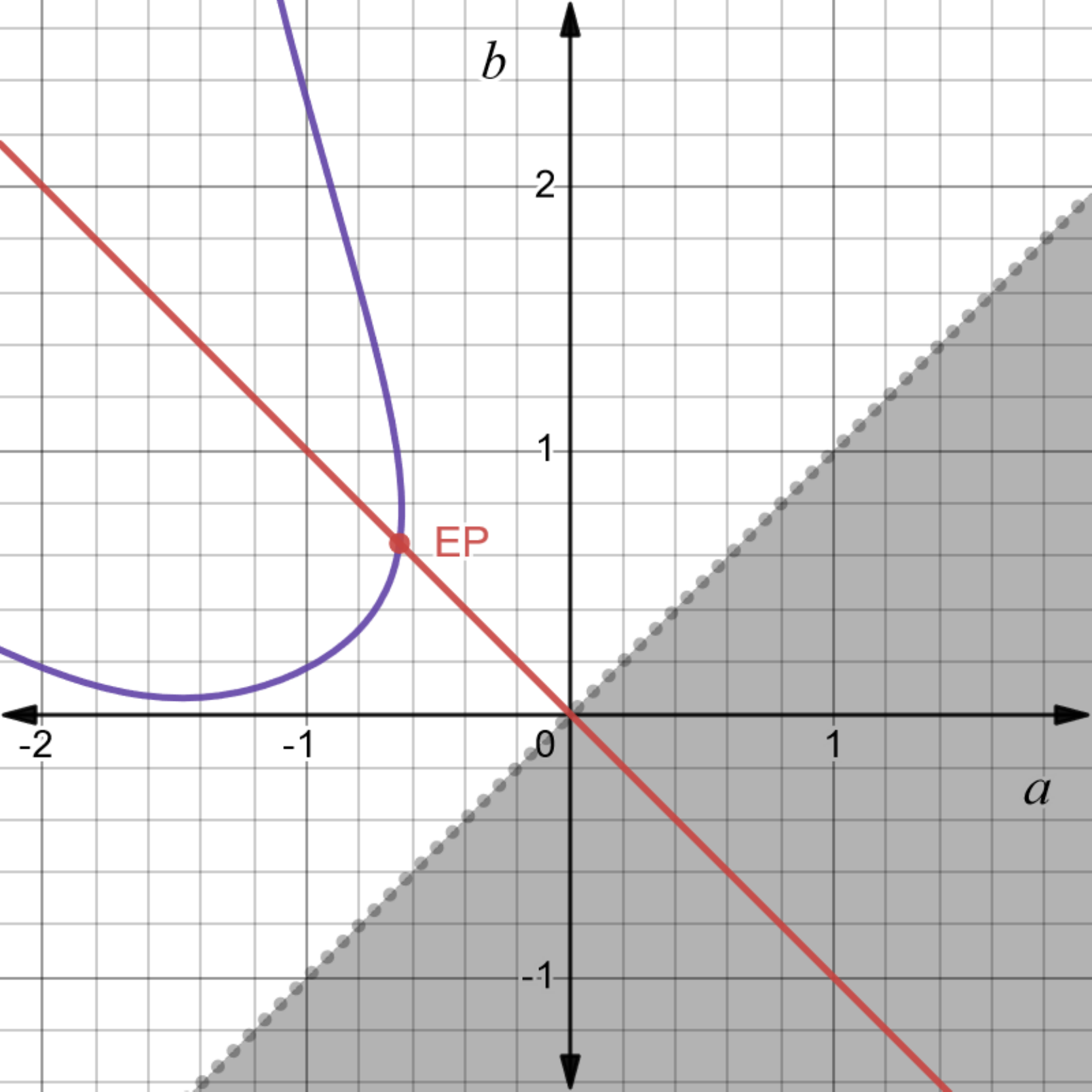}
  \label{fig:sub1}
\end{subfigure}%
\caption{Mean reverting process \eqref{ou} with multiplicative cost and parameters 
$\theta=0.4,q_d=0.1, \lambda=1, \sigma=2, \beta=1$. 
The equilibrium point (EP) is $(-0.646,0.646)$ with value $0.617$.}
\end{figure}
\subsection{``Follow the market" examples}
The idea is to introduce a cost function in such a way that the player has incentives to follow the market evolution.
The cost function is then
$$
c(x,y)= \vert   x - y \vert. 
$$
\subsubsection{Brownian motion with negative drift }\label{S:BrownianmotionwithdriftPUNISHMENT}
In this case, the driving process $X=\{X_t\}$ is
$$
X_t=\mu t+W_t,
$$
where $\mu<0$.
 We proceed to prove  that Assumption \ref{A:Assumptions2} is satisfied. By taking $x^y =y$ for all $y \in \mathbb{R}$, then $c(x,y ) \geq c(x^y,y)=0$, Second, by taking $K_y=\vert y \vert, \ \alpha_y= 1$ for all $y \in \mathbb{R}$ then \eqref{E:kalpha} is satisfied. Finally observe that for every $y \in \mathbb{R}$ the maps $ 
\pi_1(x,y) , \ \pi_2(x,y)
$ 
are decreasing on $x$ in $(-\infty,y)$, increasing on $x$ in $(y,\infty)$ and both conditions $\rm{(i)}$ and $\rm{(ii)}$ in Assumptions \ref{A:Assumptions2} are satisfied.\\
The problem can be reduced to a one variable problem. The conclusions are:
\begin{itemize}
    \item If there is a positive constant $C$ such that
  \begin{align*}
 C(1+e^{2 \mu C})(1-e^{2 \mu C})^{-1}+(q_u+q_d)\mu + \mu^{-1}&=0,\\ 
 \Big(\frac{\displaystyle C}{\displaystyle e^{2 \mu C}-1 } \Big) \frac{\displaystyle 2 e^{2 \mu C}}{\displaystyle \mu} + \frac{\displaystyle -2e^{2 \mu C}+2C \mu +1}{\displaystyle 2 \mu^2} +q_d +q_u&=0,
\end{align*}
    then every point of the set $\lbrace (a,a+C), a \in \mathbb{R} \rbrace$ is an equilibrium point.
    \item Otherwise there are no equilibrium points.
\end{itemize}
The details can be found in the Appendix \ref{S:SubSubsectionBMWD} 
\begin{figure}[!htb]
\centering
\begin{subfigure}{.5\textwidth}
  \centering
\includegraphics[width=5cm]{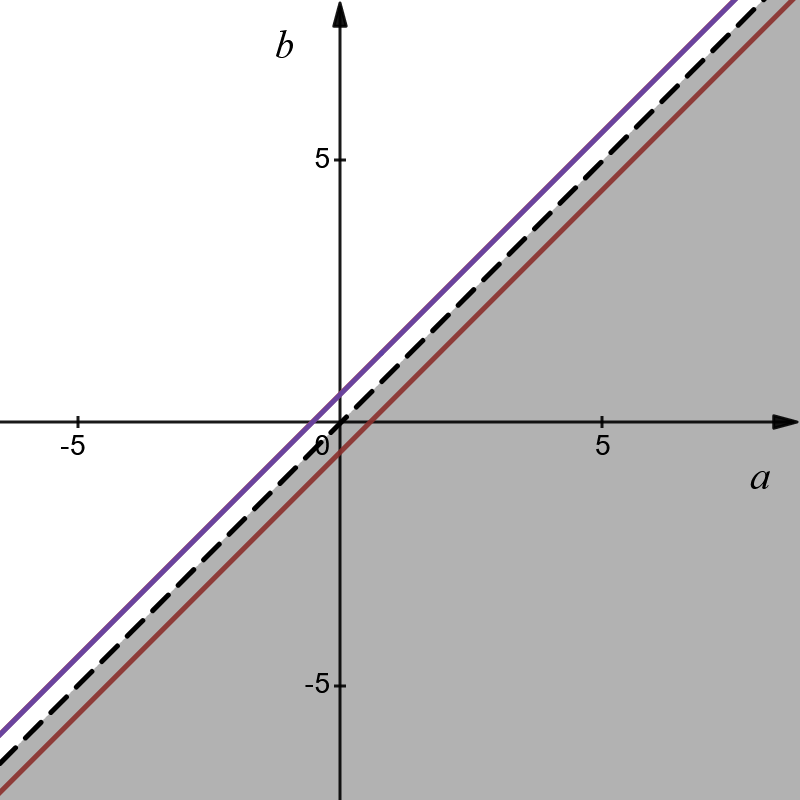}
\end{subfigure}%
\begin{subfigure}{.5\textwidth}
  \centering
  \includegraphics[width=.8\linewidth]{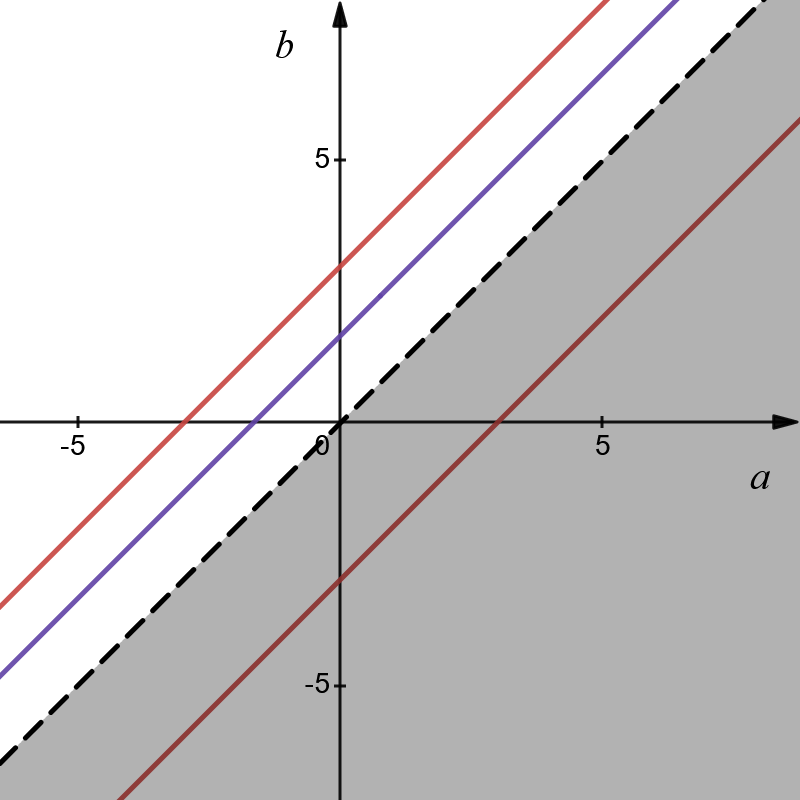}
\end{subfigure}
\caption{Brownian motion with drift and cost function $c(x,y)=|x-y|$. 
On the left ($q_u+q_d=0.1, \mu=-0.89
$) the value at equilibrium points is constant $0.848$. 
On the right ($q_u+q_d=2, \mu=-1$ ) there are no equilibrium points}
\end{figure}
\subsubsection{Ornstein Uhlenbeck process}
In this case, the process $X=\{X_t\}$ follows the stochastic differential equation
\begin{equation*}
dX_t= -\theta X_t dt+ \sigma dW_t, 
\end{equation*}
We analyze the symmetric case when $q:=q_d=q_u$ and $q \theta <1$. 
In this situation, taking the same parameters as in the previous example, 
$c(x,y)$ is under Assumption \ref{A:Assumptions2}.
The existence of equilibrium points will hold, but uniqueness not necessarily. 
Essentially, the equation $\pi_1(a,R(a,b))=\pi_2(a,R(a,b))$ is satisfied when $a=-b$ by symmetry,
so similar arguments as the ones in the multiplicative case hold. 
However the line $a+b=0$ is not the only set where $\pi_1(a,R(a,b))=\pi_2(a,R(a,b))$. 
We show that uniqueness does not always hold, see Figure \ref{fig:3}.
\begin{figure}
\centering
\includegraphics[width=5.5cm]{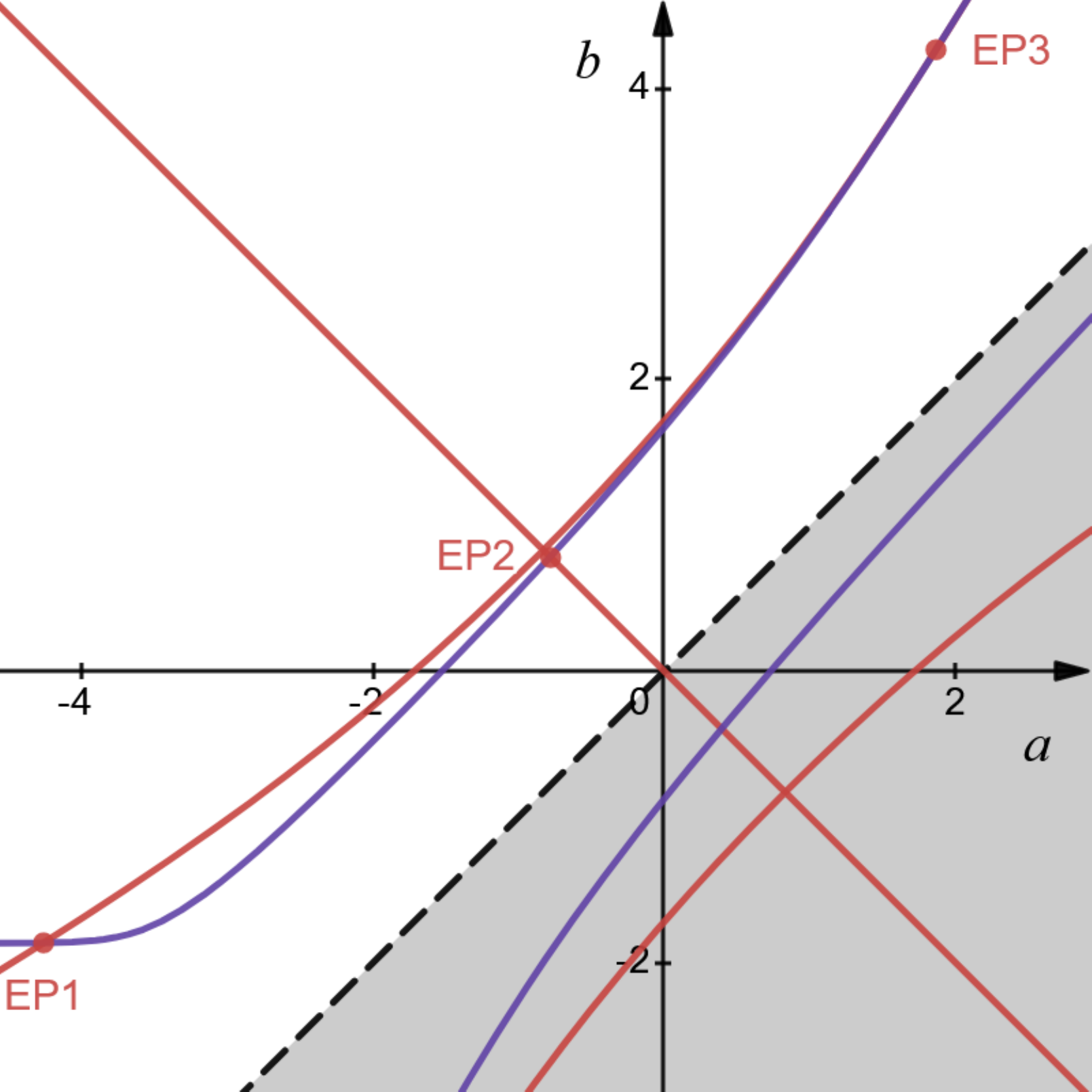}
\caption{Mean reverting process with $q=0.1$, $\theta=3$, $s=2$, $EP1\sim (-4.26,-1.86)$, $EP2\sim (-0.78,0.78)$, $EP3 \sim (1.87,4.27)$ with the values $0.839$, $0.55$ and $0.84$ at each equilibrium point  respectively}\label{fig:3}
\end{figure}
\section{Approximation of Nash equilibria in symmetric $N$-player games with mean-field interaction}\label{S:epsilon}
In this section, 
we present an approximation result for Nash equilibria in the $N$-player
game corresponding to the ergodic mean-field game considered above, when the number of players $N$
tends to infinity.
More precisely, we establish that an equilibrium point of the mean-field game 
of Definition \ref{D:EquilibriumDefinition}
 is an $\epsilon$-Nash equilibrium of the corresponding $N$-player game of Definition \ref{D:epsilonNashequilibrium}, for $N$ large enough.
These approximation results have been studied for instance in
\cite{CG} and \cite{CDF} and the references therein.
In order to
formulate the approximation result, consider:
\begin{itemize}
\item[\rm(i)] A filtered probability space 
$(\Omega,\mathcal{F},\lbrace \mathcal{F}_t\colon t\geq 0^-\rbrace,\P)$ 
that satisfies the usual conditions, where all the processes are defined.
\item[\rm(ii)] 
Diffusion processes $X,\{X^i\}_{i=1,2,\dots}$,
each of one satisfies equation \eqref{sde} driven by respective adapted independent Brownian motions $W,\{W^i\}_{i=1,2,\dots}$.
\item[\rm(iii)] The set of admissible controls $\mathcal A$ of Definition \ref{D:admissiblecontrols}, that in particular assumes, given an admissible control $\eta^i=(U^i,D^i)$, the existence of the controlled process as a solution of
\begin{equation}\label{sdeci}
dX^{i,\eta^i}_t= \mu(X^{i,\eta^i}_t)dt + \sigma (X^{i,\eta^i}_t)dW^i_t+dU^i_t-dD^i_t, \quad X^i_{0^-}=x^i,
\end{equation}
for each $i=1,2,\dots$ 
\end{itemize}
For simplicity and coherence  we denote by $X^{i,a,b}$ the solution to \eqref{sdeci} when the $i$-th player chooses 
reflecting strategies within $a<b$, denoted respectively by $U^{i,a}$ and $D^{i,b}$.
As usual, we define a vector of admissible controls by
$$
\Lambda=(\eta^1, \dots ,\eta^N)
$$ 
such that $\eta^i=(U^i,D^i)$ is an admissible control selected by the player $i$ 
in the $N$-player game.
Furthermore, we define 
$$
\aligned
\Lambda^{-i}&=(\eta^1, \dots, \eta^{i-1}, \eta^{i+1}, \dots ,\eta^N),\\
(\mu,\Lambda^{-i})&=(\eta^1, \dots, \eta^{i-1},\mu,\eta^{i+1}, \dots ,\eta^N)
\endaligned
$$ 
and, given a real continuous function $f(x)$, denote
\begin{equation}\label{E:nash}
\bar{f}^{-i}_s=\frac{1}{N-1}  \sum_{j \neq i}^N f( X_s^{j,\eta^j}),
\quad
\bar{f}^{a,b,-i}_s=\frac{1}{N-1}  \sum_{j \neq i}^N f( X_s^{j,a,b}),
\end{equation}
and, given $\mu=(U,D)\in\mathcal{A}$,
for $(\mu, \Lambda^{-i})$, consider
\begin{equation}\label{eq:vin}
V_N^{i}(\mu, \Lambda^{-i})(x)= 
\limsup_{T \rightarrow \infty} \frac{1}{T} \E_x 
\Bigg( \int_0^T c \left(X_s^{i,\mu},\bar{f}^{-i}_s\right)ds
+q_u U^i_T+q_d D^i_T \Bigg), 
\end{equation}
for a cost function  $c(x,y)$ satisfying Assumption \ref{A:Assumptions2}. 
\begin{definition}\label{D:epsilonNashequilibrium}
For fixed $\epsilon>0$ and $N\in\mathbb{N}$,  a vector of admissible controls
$\Lambda=(\eta^1,\dots ,\eta^N)$  is called an $\epsilon$-Nash equilibrium if 
for all $i$ and all $x\in\R$,
$$
V_N^{i}(\eta^i, \Lambda^{-i})(x) \leq V^i_N(\mu,\Lambda^{-i} )(x)+\epsilon,\quad\text{for all $\mu\in\mathcal{A}$}. 
$$
\end{definition}
We are ready to prove that the equilibrium points of the mean-field game are 
$\epsilon$-Nash equilibriums for the $N$-player game in two different situations: 
(\rm{i}) with reflecting controls for the players and a cost function that is convex in the second variable, 
(\rm{ii}) with general controls in $\mathcal{A}$, and the cost function $c(x,y)=|x-y|$. 
\begin{theorem}\label{T:GREATAPROXIMATION}
Consider a cost function $c(x,y)$ that satisfies Assumption \ref{A:Assumptions2}, 
and suppose that the function $f(x)$ in Def. \ref{D:EquilibriumDefinition} is continuous.
Assume also that one of the following conditions holds:
\begin{itemize}
    \item[\rm(i)]
For every fixed $x$ the function $y\mapsto c(x,y)$ is convex, 
and the set of admissible controls for each process $X^i, \ i=1, \dots ,N$, 
is the set of reflecting controls instead of $ \mathcal{A}$.
\item[\rm(ii)] 
We have $f(x)=x$ and the cost function is $c(x,y)=\vert x-y \vert$.
\end{itemize}
Then, if $(a,b)$ is an equilibrium point for the mean field game driven by $X$,
given $\epsilon>0$,
the vector of controls 
\begin{equation}\label{E:equal}
    \Lambda^{a,b}=((U^{1,a},D^{1,b}),\dots,(U^{N,a},U^{N,b})),
\end{equation}
is an $\epsilon$-Nash equilibrium for the $N$-player game,
for $N$ large enough.
\end{theorem}
In the proof of (i) we will use the following result.
\begin{lemma}\label{lemma:convexity}
Let $c(x,y)$ be a positive measurable function such that $y\mapsto c(x,y)$ is convex for each fixed $x$, and $(X,Y)$ a random vector. Then
\vskip1mm\par\noindent
{\rm(a)} If $X$ and $Y$ are independent,
\begin{equation}\label{eq:statement}
\E c(X,\E Y)\leq \E c(X,Y).
\end{equation}
\vskip1mm\par\noindent
{\rm(b)} In the general case, statement \eqref{eq:statement} is not true.
\end{lemma}
\begin{proof}[Proof of Lemma \ref{lemma:convexity}] (a) With $F_X$ and $F_Y$ the respective distributions of $X$ and $Y$, we have
\begin{align*}
\E c(X,Y)	&=\int\left[\int c(x,y)F_Y(dy)\right]F_X(dx)\\
		&\geq\int c\left(x,\int yF_Y(dy)\right)F_X(dx)=\E c(X,\E Y).
\end{align*}
To see (b), consider $c(x,y)=|x-y|$, a standard normal random variable $X\sim\mathcal{N}(0,1)$, and the random vector
$(X,Y)=(X,X)$. We have
$$
\E c(X,Y)=\E|X-X|=0<\sqrt{2\over\pi}=\E c(X,\E Y)=\E|X|,
$$
giving the counter-example that concludes the proof.
\end{proof}
\begin{proof}[Proof of {\rm(i)} in Theorem \ref{T:GREATAPROXIMATION}]
Define the function 
\begin{equation}\label{Eq:Vcost}
    V\colon\mathcal{A} \times \lbrace (a,b)\colon a<b \rbrace \rightarrow \mathbb{R}
\end{equation}
by the formula
\begin{equation*}
V(\mu,(a,b))=  
\limsup_{T \rightarrow \infty} \frac{1}{T} \E_x \left(\int_0^T c\big(X_s^\mu,\E_x(f(X^{a,b}_s))\big) ds +q_u U_T+q_d D_T \right),
\end{equation*}
where $\mu=(U,D)$. 
Take $\Lambda^{a,b}$ as in \eqref{E:equal}.
The departing point is the inequality provided by the equilibrium definition:
\begin{equation}\label{E:departing}
V((U^{a},D^{b}),(a,b))\leq V(\mu,(a,b)),\quad\text{for any $\mu\in\mathcal{A}$}.
\end{equation}
Second, by equidistribution of the player's driving processes,
$$
\E_xc(X^\mu_s,\E_x(f(X^{a,b}_s)))=\E_xc(X^\mu_s,\E_x(\bar f^{a,b,-i}_s)).
$$
Now, taking $c<d$ and $\mu=(U^c,D^d)$, by convexity and independence between the coordinates, we apply (i) in Lemma \ref{lemma:convexity}:
$$
\E_xc(X^{c,d}_s,\E_x(\bar f^{a,b,-i}_s))\leq \E_xc(X^{c,d}_s,\bar f^{a,b,-i}_s),
$$
Integrating in time, taking expectation and ergodic limits, combined with \eqref{E:departing},
it follows
\begin{equation}\label{E:following}
V((U^{a},D^{b}),(a,b))\leq  V((U^{c},D^{d}),(a,b))\leq V^i_N((U^c,D^d),\Lambda^{a,b,-i}_N).
\end{equation}
Now, as $f(x)$ is continuous, the set $f([a,b])$ is a closed interval, denote it by $[m,M]$,
and  observe that 
$$
(X^{i,a,b}_s,\bar f^{a,b,-i}_s)\in [a,b]\times[m,M], 
$$
that is a product of closed intervals. 
Then, as $c(x,y)$ is uniformly continuous in this compact domain, 
given $\epsilon$ there exist $\delta$ s.t.
$$    |c(X^\mu_s,\bar f^{a,b,-i}_s)-c(X^\mu_s,\E_x(f(X^{a,b}_s))|\leq \frac\epsilon2,
$$
whenever $|\bar f^{a,b,-i}_s-\E_x(f(X^{a,b}_s))|\leq\delta$.
Now we apply Hoeffding's inequality for bounded random variables $m\leq f(X^{j,a,b})\leq M$,
obtaining,
$$
\P\left(|f^{a,b,-i}-\E_x(f(X^{a,b}_s))|\geq\delta\right)\leq 2e^{-{2\delta^2(N-1)\over (M-m)^2}}.
$$
Finally, denoting $\|c\|_\infty=\max\{|c(x,y)|\colon a\leq x\leq b, m\leq y\leq M\}$, we have
\begin{multline*}
\left|\frac1T\E_x\int_0^T\left(c(X^{i,a,b}_s,\bar f^{a,b,-i}_s)-c(X^{i,a,b}_s,\E_x(f(X^{a,b}_s))\right)\,ds\right|\\
\leq \frac\epsilon2+\frac{2\|c\|_\infty}T\int_0^T\P_x\left(|\bar f^{a,b,-i}_s-\E_x(f(X^{a,b}_s))|\geq \delta\right)\,ds\\
\leq \frac\epsilon2+4\|c\|_\infty e^{-{2\delta^2(N-1)\over (M-m)^2}}\leq\epsilon,
\end{multline*}
for $N$ large enough.  From this follows that, for these values of $N$, 
$$
\left|V((U^{a},D^{b}),(a,b))-V^i_N((U^{a},D^{b}),\Lambda^{a,b,-i}_N)\right|\leq \epsilon,
$$
concluding, in view of \eqref{E:following}, the proof of (\rm{i}).
\end{proof}
\begin{proof}[Proof of {\rm(ii)} in  Theorem \ref{T:GREATAPROXIMATION}] 
As $f(x)=x$, we denote
\begin{equation*}
\bar{X}^{a,b,-i}_{s,N}=\frac{1}{N-1}  \sum_{j \neq i}^N X_s^{j,a,b}.
\end{equation*}
As $(a,b)$ is an equilibrium point of the mean field game, 
given $\epsilon>0$,
we have to prove that
\begin{equation}\label{eq:nash}
V_N^{i}((U^{i,a},D^{i,b}), \Lambda^{a,b,-i})\leq V_N^{i}(\mu,\Lambda^{a,b,-i})+\epsilon,
\end{equation}
for any strategy $\mu\in\mathcal{A}$, for $N$ large enough.
Observe now that, given a strategy $\eta$, if for some $N_0$ and some $i_0$, 
 we have $V_{N_0}^{i_0}(\eta,\Lambda^{a,b,-i_0})<\infty$, then
\begin{align*}
\limsup_{T \rightarrow \infty} \frac{1}{T} \E_x 
\int_0^T\left|
X_s^\eta-\bar{X}^{a,b,-i_0}_{s,N_0}
\right|
ds=:I_0<\infty,\\
\limsup_{T \rightarrow \infty} \frac{1}{T} \E_x (q_u U_T)=:J_0<\infty,\\
\limsup_{T \rightarrow \infty} \frac{1}{T} \E_x (q_d D_T)=:K_0<\infty.
\end{align*}
By adding and substracting $\bar{X}^{a,b,-i_0}_{s,N_0}$ and the triangular inequality, it follows
$$
\limsup_{T \rightarrow \infty}
\frac{1}{T} \E_x 
\int_0^T\left|
X_s^\eta
\right|
ds\leq I_0+\max(|a|,|b|),
$$
and in consequence
$$
\max(V(\eta,(a,b)),V_N^{i}(\eta,\Lambda^{a,b,-i}))\leq I_0+J_0+K_0+2\max(|a|,|b|),
$$
for all $N$ and $i$.
Then, in order to prove \eqref{eq:nash}, it is enough to  consider these strategies $\eta$.
Now, as $(a,b)$ is an equilibrium point,
we have
\begin{multline*}
V_N^{i}((U^{i,a},D^{i,b}),\Lambda^{a,b,-i})- V_N^{i}(\eta,\Lambda^{a,b,-i})\\
=V_N^{i}((U^{i,a},D^{i,b}),\Lambda^{a,b,-i})- V(\eta,(a,b))
+V(\eta,(a,b))- V_N^{i}(\eta,\Lambda^{a,b,-i})\\
\leq
V_N^{i}((U^{i,a},D^{i,b}),\Lambda^{a,b,-i})- V((U^a,D^b),(a,b))\\
+V(\eta,(a,b))- V_N^{i}(\eta,\Lambda^{a,b,-i})\\
\leq
2\sup_{\eta}\left|
V(\eta,(a,b))- V_N^{i}(\eta,\Lambda^{a,b,-i})
\right|.
\end{multline*}
By the triangular inequality, given $\eta$, we have
\begin{multline*}
\left|V(\eta,(a,b))- V_N^{i}(\eta,\Lambda^{a,b,-i})\right|\\
\leq
\limsup_{T \rightarrow \infty} \frac{1}{T} \E_x 
\int_0^T\left|
|X_s^\eta-\bar{X}^{a,b,-i}_s|-|X_s^\eta-\E_x(X_s^{a,b})|
\right|
ds\\
\leq
\limsup_{T\rightarrow \infty} \frac{1}{T}  
\int_0^T\E_x|\bar{X}^{a,b,-i}_s-\E_x(X_s^{a,b})|
ds\leq {b-a\over\sqrt{N-1}},
\end{multline*}
because
$$
\E_x|\bar{X}^{a,b,-i}_s-\E_x(X_s^{a,b})|\leq
\sqrt{{1\over N-1}\var_x(X_s^{a,b})}\leq{b-a\over \sqrt{N-1}},
$$
concluding the proof.
\end{proof}
%
\subsection*{Statements and Declarations}
None of the authors have any conflicts of interest.
The second and third authors are supported by CSIC - Proyecto Grupos nr. 22620220100043UD, Universidad de la Rep\'ublica, Uruguay.

%
\appendix
\section{Appendix}\label{A}
\subsection{Calculations of the examples}\label{calc}
\subsubsection{Absolute value, Brownian motion with negative drift}\label{S:SubSubsectionBMWD}
In this case (see \cite{BS}), 
$$
S'(x)=\exp (-2\mu x), \ m'(x)=2e^{2\mu x}. 
$$
Therefore
\begin{equation*}
R(a,b)
=\frac{\displaystyle \int_a^b 2ue^{2\mu u} du}{\displaystyle  \int_a^b 2e^{2\mu u} du} 
= \frac{\displaystyle  b e^{2 \mu b}-ae^{2 \mu a}}{\displaystyle e^{2 \mu b}-e^{2 \mu a}} -\frac{1}{2\mu}.  
\end{equation*}
The cost function is
$c(x,y)= \vert  x - y \vert .$
We proceed to analyze the function $\rho$. The notations are the same as Proposition \ref{T:Equilibrium}. 
The equation $\pi_2(a,R(a,b))=\pi_1(b,R(a,b))$ is equivalent to
$$
F(a,b)=(b+ a) +\frac{1}{\mu} -2 \left( \frac{\displaystyle be^{2\mu b}-ae^{2\mu a}}{e^{2\mu b}-e^{2\mu a}}  \right) +\mu (q_u+q_d)=0.
$$
On one hand, when $a<0$ the equation $F(a,b)=0$ has a solution $b>0$ because
\[  
a+ \mu^{-1}+\mu(q_u+q_d)<0 
\]
On the other, when $a \geq 0,$ the equation $F(a,b)=0$ also has a root because $b-2R(a,b) \to -\infty$ when $b \to \infty$. \\
We compute the partial derivative
$$\frac{\partial F}{\partial b} (a,b)=  -1   \frac{\displaystyle (e^{2\mu (b-a)}) \left(  2\mu (a-b)-1+e^{2 \mu (b-a)}  \right)}{\displaystyle (1-e^{2\mu (b-a)})^2} > 0 . $$ 
We deduce that the function $\rho$ is well defined in all $\mathbb{R}$  and the roots of $F(a,b)$ are unique for each $a$.
Furthermore, if $C$ is the positive constant that satisfies the equality
\[C(1+e^{2 \mu C})(1-e^{2 \mu C})^{-1}=-(q_u+q_d) \mu - \mu^{-1}, \] then $F(a,a+C)=0$. So $\rho(a)= a+C$.

From  Theorem \ref{T:optimal} we know the equilibrium points $(a,b)$ must satisfy the equality:
\begin{equation}\label{E:Integralabsolutebeta1}
 \int_a^{b} (\vert t- R(a,b) \vert -b + R(a,b) )2e^{2\mu t}dt + (q_u+q_d)e^{2 \mu a}=0  .
\end{equation}
More explicitly,
\begin{align}\label{E:Integralabsolutebeta1two}
 \int_a^{b} \bigg(\Big\vert t+\frac{1}{2\mu} -\frac{\displaystyle be^{2\mu b}-ae^{2 \mu a} }{\displaystyle e^{2 \mu b} -e^{2\mu a}} \Big\vert -b-\frac{1}{2 \mu} + &\frac{\displaystyle be^{2\mu b}-ae^{2 \mu a} }{\displaystyle e^{2 \mu b} -e^{2\mu a}}  \bigg)2e^{2\mu t}dt \nonumber \\ &+ (q_u+q_d)e^{2 \mu a}=0  
\end{align}
With the change of variable $u=t-b$ the equality \eqref{E:Integralabsolutebeta1two} is equivalent to: 
\begin{align}\label{E:Integralabsolutebeta1three}
 \int_{a-b}^{0} \bigg(\Big\vert u-\frac{\displaystyle (b-a)}{\displaystyle e^{2\mu(b-a)}-1} +\frac{1}{2\mu} \Big\vert  + \frac{\displaystyle (b-a) }{\displaystyle e^{2 \mu (b-a)} -1 }-\frac{1}{2\mu} \bigg)&2e^{2\mu (u+b-a)}du \ e^{2 \mu a}\nonumber \\ &+ (q_u+q_d)e^{2 \mu a}=0 .
\end{align}
Therefore if there is a point $(A,B)$ that satisfies \eqref{E:Integralabsolutebeta1two} then every point $(a,b)$ such that $b-a=B-A $ also satisfies \eqref{E:Integralabsolutebeta1two}. 
To solve the integral define $C \coloneqq b-a$ and $K \coloneqq C (\exp(2 \mu C)-1)^{-1}-(2\mu)^{-1}$ so the integral in \eqref{E:Integralabsolutebeta1three} becomes
\begin{multline*}
\int_{-C}^0 \left( \left\vert u-K\right\vert + K\right) 2e^{2 \mu (C+u)}du\\
= 
2e^{2 \mu  K}\int_{-C-K}^{-K} \vert r \vert e^{2 \mu (C+r)}dr + Ke^{2 \mu C}\frac{1-e^{-2 \mu C }}{\mu}\\
= \Big(\frac{\displaystyle C}{\displaystyle e^{2 \mu C}-1 } \Big) \frac{\displaystyle 2 e^{2 \mu C}}{\displaystyle \mu} + \frac{\displaystyle -2e^{2 \mu C}+2C \mu +1}{\displaystyle 2 \mu^2}.
\end{multline*}
Solving the integral in \eqref{E:Integralabsolutebeta1two} we conclude that a point $(a,b)$ is an equilibrium point iff $C \coloneqq b-a$ satisfies 
\begin{align*}
 &C(1+e^{2 \mu C})(1-e^{2 \mu C})^{-1}+(q_u+q_d)\mu + \mu^{-1}=0 \\ 
 &\Big(\frac{\displaystyle C}{\displaystyle e^{2 \mu C}-1 } \Big) \frac{\displaystyle 2 e^{2 \mu C}}{\displaystyle \mu} + \frac{\displaystyle -2e^{2 \mu C}+2C \mu +1}{\displaystyle 2 \mu^2} +q_d +q_u=0
\end{align*}
Using \ref{T:Limitcost} it can be shown that the value of the game is the same for all equilibrium points, and it is
\[ 
-C \exp(-2 \mu C)(1-\exp(-2 \mu C))^{-1}-(2\mu)^{-1}+q_d \mu. 
\]
\subsubsection{Multiplicative cost}\label{S:SubSubsectionOUMC}
Proposition \ref{T:Equilibrium} is used with $a_0=0$. We assume $\lambda \geq 1$ (in other cases symmetrical arguments can be used), 
$c(x,y)= \max(-\lambda x ,x ) (1+ \vert y \vert^{\beta})$, $q:= q_d=q_u \lambda^{-1}, \ \beta \geq 1$ and  $q_d \theta < 1 \wedge \lambda^{-1} $ so the function $c$ is a cost function.

The equality $\pi_1(b,R(a,b))=\pi_2(a,R(a,b))$ reads as $-\lambda a =b$ taking into account that $a<0<b$ . Furthermore if $\sigma$ is an even function we deduce $R(a,-\lambda a)$ is decreasing in $a$ so:
$$
\pi_2 (t,L(a))- \pi_2(a,L(a))= (a-t)(1+R(a,-\lambda a)^{\beta}-q_d \lambda \theta),
$$
which decreases to $-\infty $ in $a$. 
$$
\pi_1 (t,L(a))- \pi_1(-\lambda a,L(a))= (t- \lambda a)(1+R(a,-\lambda a)^{\beta}-q_u \theta),
$$
which decrease to $-\infty$ implying that uniqueness and existence holds.

\end{document}